\newtheorem{theorem}{Theorem}[section]
\newtheorem{lemma}[theorem]{Lemma}
\newtheorem{corollary}[theorem]{Corollary}
\begin{document}
\textwidth 150mm \textheight 225mm
\title{Laplacian eigenvalue conditions for edge-disjoint spanning trees and a forest with constraints\thanks{Supported by the National Natural Science Foundation of China (No. 12271439).}}
\author{{Yongbin Gao$^{a,b}$, Ligong Wang$^{a,b,}$\footnote{Corresponding author.}}\\
{\small $^a$ School of Mathematics and Statistics, Northwestern
Polytechnical University,}\\ {\small  Xi'an, Shaanxi 710129,
P.R. China.}\\
{\small $^b$ Xi'an-Budapest Joint Research Center for Combinatorics, Northwestern
Polytechnical University,}\\
{\small Xi'an, Shaanxi 710129,
P.R. China. }\\
{\small E-mail: gybmath@163.com, lgwangmath@163.com} }
\date{}
\maketitle
\begin{center}
\begin{minipage}{120mm}
\vskip 0.3cm
\begin{center}
{\small {\bf Abstract}}
\end{center}
{\small Let $k$ be a positive integer and let $G$ be a simple graph of order $n$ with minimum degree $\delta$. A graph $G$ is said to have property $P(k, d)$ if it contains $k$ edge-disjoint spanning trees and an additional forest $F$ with edge number $|E(F)| > \frac{d-1}{d}(|V(G)| - 1)$, such that if $F$ is not a spanning tree, then $F$ has a component with at least $d$ edges. Let $D(G)$ be the degree diagonal matrix of $G$. We denote $\lambda_i$ and $\mu_i$ as the $i$th largest eigenvalue of the adjacency matrix $A(G)$ of $G$ and the Laplacian matrix $L(G) = D(G) - A(G)$ of $G$ for $i = 1, 2, \ldots, n$, respectively.

In this paper, we investigate the relationship between Laplacian eigenvalues and property $P(k, \delta)$. Let $t$ be a positive integer, and define $\mathcal{G}_t$ as the set of simple graphs such that each $G \in \mathcal{G}_t$ contains at least $t+1$ non-empty disjoint proper subsets $V_1, V_2, \ldots, V_{t+1}$ satisfying $V(G) \setminus \bigcup_{i=1}^{t+1} V_i \neq \emptyset$ and edge connectivity $\kappa'(G) = e(V_i, V(G) \setminus V_i)$ for any $i = 1, 2, \ldots, t+1$. For the class of graphs $\mathcal{G}_1$ with minimum degree $\delta$, we provide a sufficient condition involving the third smallest Laplacian eigenvalue $\mu_{n-2}(G)$ for a graph $G\in \mathcal{G}_1$ to have property $P(k, \delta)$. Similarly, for the class of graphs $\mathcal{G}_2$ with minimum degree $\delta$, we establish a corresponding sufficient condition involving the fourth smallest Laplacian eigenvalue $\mu_{n-3}(G)$ for a graph $G\in \mathcal{G}_2$ to have property $P(k, \delta)$. Furthermore, we extend the spectral conditions for all the
results about $\mu_{n-2}(G)$, $\mu_{n-3}(G)$ and $\lambda_2(G)$ to the general graph matrices $aD(G) + A(G)$ and $aD(G) + bA(G)$.

\vskip 0.1in \noindent {\bf Key Words}: \ Edge-disjoint spanning trees, Laplacian eigenvalues, Quotient matrix, Property $P(k, \delta)$ }
\end{minipage}
\end{center}

\section{Introduction}
In this paper, we consider only finite, undirected, and simple graphs. Let $G$ be a graph with vertex set $V(G) = \{v_1, v_2, \ldots, v_n\}$ and edge set $E(G) = \{e_1, e_2, \ldots, e_m\}$. We use $\delta = \delta(G)$ and $\Delta = \Delta(G)$ to denote the minimum degree and maximum degree of $G$, respectively. For a connected graph $G$, let $\tau(G)$ denote the maximum number of edge-disjoint spanning trees contained in $G$. This parameter is commonly referred to as the spanning tree packing number.

Let $X$ and $Y$ be two disjoint subsets of $V(G)$. We denote by $E(X, Y)$ the set of edges in $G$ with one vertex in $X$ and the other vertex in $Y$, and let $e(X, Y) = |E(X, Y)|$. Let $t$ be a positive integer. Define $\mathcal{G}_t$ as the set of simple graphs such that each $G \in \mathcal{G}_t$ contains at least $t+1$ non-empty disjoint proper subsets $V_1, V_2, \ldots, V_{t+1}$ satisfying $V(G) \setminus \bigcup_{i=1}^{t+1} V_i \neq \emptyset$ and edge connectivity $\kappa'(G) = e(V_i, V(G) \setminus V_i)$ for any $i = 1, 2, \ldots, t+1$.

The adjacency matrix of a graph $G$ of order $n$, denoted by $A(G)$, is an $n \times n$ matrix whose $(i,j)$-entry is $1$ if $v_i$ and $v_j$ are adjacent, and $0$ otherwise. Let $D(G) = \text{diag}(d_1, d_2, \ldots, d_n)$ be the degree diagonal matrix of $G$, where $d_i$ is the degree of vertex $v_i$. The Laplacian matrix of $G$ is defined as $L(G) = D(G) - A(G)$ and the signless Laplacian matrix of $G$ is defined as $Q(G) = D(G) + A(G)$. Since $A(G)$, $L(G)$ and $Q(G)$ are real and symmetric, all their eigenvalues are real. Throughout this paper, we denote the $i$th largest eigenvalue of $A(G)$, $L(G)$ and $Q(G)$ by
$\lambda_i(G) $, $ \mu_i(G) $ and $ q_i(G)$, respectively.

The problem of determining the relationship between $\tau(G)$ and the eigenvalues of $G$ was originally proposed by Seymour in a private communication to Cioab\v{a} (see $\cite{RF1}$). Motivated by this problem, Cioab\v{a} and Wong $\cite{RF1}$ initiated the study of $\tau(G)$ using the second largest eigenvalue $\lambda_2(G)$. They provided sufficient spectral conditions for $\tau(G) \ge k$ in $d$-regular graphs for $k=2, 3$, and conjectured a general bound for $4 \le k \le \lfloor\frac{d}{2}\rfloor$. This conjecture was subsequently generalized to non-regular graphs by Gu et al. $\cite{RF2}$ and was eventually fully resolved by Liu et al. $\cite{RF3, RF13}$ in all cases. Cioab\v{a} et al. $\cite{RF15}$ later constructed extremal graphs to demonstrate that the bound obtained in $\cite{RF3}$ is essentially tight. Liu et al. \cite{RF16} improved the result by considering the girth of the graph.

In addition to $\lambda_{2}(G)$, other eigenvalues have also attracted attention. Fan et al. $\cite{RF4}$ investigated the relationship between the spectral radius $\rho(G)$(or $\lambda_1(G)$) and $\tau(G)$ by analyzing extremal graphs. Duan et al. $\cite{RF5}$ extended the scope to the third largest eigenvalue, establishing connections between $\tau(G)$ and $\lambda_3(G)$ (as well as $\mu_{n-2}(G)$ and $q_3(G)$) for graphs in $\mathcal{G}_1$. Hu et al. $\cite{RF6}$ determined the relationship between $\tau(G)$ and $\mu_{n-2}(G)$ for graphs in $\mathcal{G}_1$, as well as the relationship between $\tau(G)$ and $\mu_{n-3}(G)$ for graphs in $\mathcal{G}_2$.

An important tool in these investigations is the classical Tree Packing Theorem by Nash-Williams $\cite{RF7}$ and Tutte $\cite{RF8}$. Let $\mathcal{P} = \{V_1, V_2, \ldots, V_p\}$ be a partition of the vertex set $V(G)$ with size $|\mathcal{P}| = p \ge 2$, where $p$ denotes the number of parts in the partition. For any nontrivial graph $G$, the fractional packing number $\nu_f(G)$ is defined as
\[
	\nu_f(G) = \min_{p \ge 2} \frac{\sum_{1 \le i < j \le p} e(V_i, V_j)}{p - 1}.
\]
The classical Tree Packing Theorem states that $\tau(G) \ge k$ if and only if $\nu_f(G) \ge k$. 

Spectral conditions for the fractional packing number were earlier studied by Hong et al. \cite{RF14}. Recently, Fang and Yang $\cite{RF9}$ provided a structural explanation for the fractional part of $\nu_f(G)$. This motivated Cai and Zhou $\cite{RF10}$ to define the property $P(k, d)$. A graph $G$ is said to have property $P(k, d)$ if it satisfies the following:
\begin{enumerate}[{\rm(a)}]
	\item $\tau(G) \ge k$,
	\item apart from $k$ edge-disjoint spanning trees, there exists another forest $F$ with size $|E(F)| > \frac{d-1}{d}(n-1)$,
	\item if $F$ is not a spanning tree, then $F$ has a component with at least $d$ edges.
\end{enumerate}

The following result by Fang and Yang $\cite{RF9}$ (see also $\cite{RF10}$) serves as a bridge between the fractional packing number and property $P(k, d)$, which plays a crucial role in our proofs.
\noindent\begin{theorem}[$\cite{RF10, RF9}$] \label{thm:1.1}
	Let $k$ and $d$ be positive integers. For a nontrivial graph $G$, if \[\nu_f(G) > k + \frac{d-1}{d},\] then $G$ has property $P(k, d)$.
\end{theorem}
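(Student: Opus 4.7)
The plan is to apply the Nash--Williams--Tutte Tree Packing Theorem to extract the $k$ spanning trees from $\nu_f(G) > k$, and then to use the additional slack $\nu_f(G) - k > \frac{d-1}{d}$ via matroid union and a delicate exchange argument to produce the required forest $F$.

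First, since $\nu_f(G) > k + \frac{d-1}{d} \ge k$, Nash--Williams--Tutte yields $\tau(G) \ge k$, establishing (a). If additionally $\nu_f(G) \ge k+1$, a $(k+1)$-st spanning tree $T_{k+1}$ edge-disjoint from $T_1,\ldots,T_k$ exists; taking $F = T_{k+1}$ makes $|E(F)| = n-1 > \frac{d-1}{d}(n-1)$ and renders (c) vacuous. So we may assume $k < \nu_f(G) < k+1$.

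In this regime I would jointly choose $T_1,\ldots,T_k$ and $F$ so that $T_1,\ldots,T_k$ are spanning trees, $F$ is a forest edge-disjoint from them, and $|E(F)|$ is as large as possible. Viewing this as a common independent set problem in the union of $k+1$ graphic matroids on $G$ (with the first $k$ blocks of full rank), Edmonds' matroid union theorem gives a min--max formula expressing the maximum over partitions $\mathcal{P}$ of $V(G)$. Inserting the hypothesis $e_\mathcal{P}(G) > (k+\frac{d-1}{d})(|\mathcal{P}|-1)$ and using $|\mathcal{P}|\le n$, one obtains
\[
	|E(F)| \ge (n-1) - \max_{\mathcal{P}}\bigl[(k+1)(|\mathcal{P}|-1) - e_\mathcal{P}(G)\bigr]^+ > (n-1) - \frac{n-1}{d} = \frac{d-1}{d}(n-1),
\]
which is (b).

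The main obstacle is (c). Suppose for contradiction that $F$ is not spanning and every component of $F$ has at most $d-1$ edges, and let $\mathcal{P}^*$ be the partition of $V(G)$ into the vertex sets of the components of $F$; then every part of $\mathcal{P}^*$ has at most $d$ vertices and $|\mathcal{P}^*|\ge 2$. The maximality of $|E(F)|$ forces every edge of $G$ outside $T_1\cup\cdots\cup T_k\cup F$ to lie inside a part of $\mathcal{P}^*$ (otherwise it could be added to $F$ without creating a cycle), and hence $e_{\mathcal{P}^*}(G) = \sum_i e_{\mathcal{P}^*}(T_i)$. The delicate combinatorial step is to show, via an exchange argument using the spanning tree $F[V_j]$ of each part $V_j$, that one may re-select the trees so that $T_i[V_j]$ is a spanning tree of $V_j$ for every $i,j$; this forces $e_{\mathcal{P}^*}(T_i) = |\mathcal{P}^*|-1$ and hence $e_{\mathcal{P}^*}(G) = k(|\mathcal{P}^*|-1)$, directly contradicting the hypothesis $e_{\mathcal{P}^*}(G) > (k+\frac{d-1}{d})(|\mathcal{P}^*|-1)$. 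This exchange step --- essentially the content of Fang--Yang's structural refinement \cite{RF9} of the fractional packing number --- is where I expect the proof to require the most care.
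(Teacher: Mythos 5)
First, note that the paper does not prove Theorem \ref{thm:1.1} at all: it is quoted from \cite{RF9,RF10} and used as a black box, so your proposal can only be judged on its own terms. Parts (a) and (b) of your argument are essentially sound: the reduction to matroid union, the fact that a maximum independent set of the union of $k+1$ graphic matroids can be repartitioned so that the first $k$ blocks form $k$ edge-disjoint spanning trees, and the computation $|E(F)|=(n-1)-\max_{\mathcal{P}}\bigl[(k+1)(|\mathcal{P}|-1)-e_{\mathcal{P}}(G)\bigr]>\frac{d-1}{d}(n-1)$ are all correct (though the simultaneous full-rank claim deserves a sentence of justification via the standard exchange property of matroid union).

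The argument for (c), however, has a genuine gap, and the specific claim it rests on is false. You assert that one may re-select the trees so that $T_i[V_j]$ is a spanning tree of $G[V_j]$ for every $i,j$. This would require $G[V_j]$ to contain $k+1$ edge-disjoint spanning trees; if some component of $F$ is a single edge, then $|V_j|=2$ and a simple graph cannot supply even $2$ edge-disjoint spanning trees of a two-vertex induced subgraph, so the re-selection is impossible whenever $F$ has a small component --- which is exactly the situation you are trying to analyze. Worse, the intended contradiction cannot be reached from the partition $\mathcal{P}^*$ at all: take $k=1$, $d=2$ and suppose $F$ is a perfect matching (every component has one edge). Then every part has two vertices, no edge of $T_1$ lies inside a part (the unique candidate edge is already in $F$), so $e_{\mathcal{P}^*}(G)=e_{\mathcal{P}^*}(T_1)=n-1$, while the hypothesis only demands $e_{\mathcal{P}^*}(G)>\frac{3}{2}\bigl(\frac{n}{2}-1\bigr)$, which is comfortably satisfied. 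So the hypothesis is not violated at $\mathcal{P}^*$, and no contradiction follows. Indeed a maximum forest can genuinely be ``bad'' while a different maximum forest is ``good'' (in $K_4$ minus an edge, $\{23,24\}$ and a bad matching-type choice are both complements of spanning trees), so the correct proof cannot argue by contradiction from an arbitrary maximum $F$; it must either impose a secondary optimization on $F$ (e.g., among maximum forests, maximize the size of a largest component) and perform the exchange on $F$ itself, or extract the structure from the tight partition in the matroid-union min--max. This missing step is precisely the content of \cite{RF9}, and as written your proposal does not supply it.
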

Naturally, this leads to a fundamental problem: what is the relationship between the eigenvalues of a graph and property $P(k,d)$? Cai and Zhou $\cite{RF10}$ started this investigation by establishing connections between property $P(k,d)$ and the spectral radius $\rho(G)$ (or $\lambda_1(G)$), as well as the second largest adjacency eigenvalue $\lambda_2(G)$. 

Inspired by the works of Cai and Zhou $\cite{RF10}$ and Hu et al. $\cite{RF6}$, this paper primarily investigates the relationship between Laplacian eigenvalues and property $P(k,d)$. In Section 3, we establish a sufficient condition involving the third smallest Laplacian eigenvalue $\mu_{n-2}(G)$ for graphs in $\mathcal{G}_1$. In Section 4, we extend our analysis to the class of graphs $\mathcal{G}_2$, providing a condition involving the fourth smallest Laplacian eigenvalue $\mu_{n-3}(G)$. Furthermore, we generalize these main results to the spectra of matrices $aD(G) + A(G)$ and $aD(G) + bA(G)$, where $a$ and $b$ are real numbers satisfying $a \ge -1$, $b \neq 0$, and $\frac{a}{b} \ge -1$. The main results relating Laplacian eigenvalues to property $P(k, \delta)$ are stated in the following theorems.
\noindent\begin{theorem} \label{thm:1.2}
	Let $k \ge 2$ be an integer and let $G \in \mathcal{G}_1$ be a graph with minimum degree $\delta \ge 2k+2$. If
	\[
	\mu_{n-2}(G) > \frac{16(k+\frac{\delta-1}{\delta})}{3(\delta+1)},
	\]
	then $G$ has property $P(k, \delta)$.
\end{theorem}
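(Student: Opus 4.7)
My strategy is to argue by contradiction, using Theorem \ref{thm:1.1} as a bridge between property $P(k,\delta)$ and the fractional packing number. Suppose $G$ does not have property $P(k,\delta)$; Theorem \ref{thm:1.1} then yields
\[
\nu_f(G) \; \le \; k + \tfrac{\delta-1}{\delta},
\]
so there is a partition $\mathcal{P}^* = \{V_1, \ldots, V_p\}$ of $V(G)$ with $p \ge 2$ satisfying
\[
\sum_{1 \le i < j \le p} e(V_i, V_j) \; \le \; \bigl(k + \tfrac{\delta-1}{\delta}\bigr)(p-1).
\]
Refining if necessary, I may assume each $G[V_i]$ is connected.

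Next I invoke $G \in \mathcal{G}_1$: fix disjoint non-empty proper subsets $W_1, W_2 \subset V(G)$ with $W_3 := V(G) \setminus (W_1 \cup W_2) \ne \emptyset$ and $e(W_i, V(G) \setminus W_i) = \kappa'(G)$ for $i = 1, 2$. Setting $\alpha := e(W_1, W_3)$ and $\beta := e(W_1, W_2)$, the cut identities force $e(W_2, W_3) = \alpha$ and $\alpha + \beta = \kappa'(G)$. Form the symmetric quotient matrix $\tilde{B}_L(\pi)$ of $L(G)$ for the 3-partition $\pi = \{W_1, W_2, W_3\}$: its diagonal entries are $\kappa'(G)/|W_1|$, $\kappa'(G)/|W_2|$, $2\alpha/|W_3|$ and its off-diagonal $(i,j)$-entries are $-e(W_i, W_j)/\sqrt{|W_i||W_j|}$. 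Because $L(G)\mathbf{1} = 0$, the vector $(\sqrt{|W_1|}, \sqrt{|W_2|}, \sqrt{|W_3|})^{\!\top}$ lies in $\ker \tilde{B}_L(\pi)$, so its eigenvalues are $0 = \theta_3 \le \theta_2 \le \theta_1$, and Cauchy interlacing gives $\mu_{n-2}(G) \le \theta_1$. The task reduces to proving
\[
\theta_1 \; \le \; \frac{16 \bigl(k + \tfrac{\delta-1}{\delta}\bigr)}{3(\delta+1)},
\]
which contradicts the spectral hypothesis.

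I would establish this upper bound in three sub-steps. (i) Show $|W_i| \ge \delta + 1$ for each $i$. For $i = 1, 2$ this follows from $\kappa'(G) \le \delta$ and the edge count $\delta |W_i| \le 2 e(G[W_i]) + \kappa'(G) \le |W_i|(|W_i| - 1) + \delta$; the singleton case $|W_i| = 1$ is ruled out by combining $\delta \ge 2k+2$ with the witness $\mathcal{P}^*$. For $W_3$, a similar count together with the witness-partition cut bound forces $|W_3| \ge \delta + 1$ as well, since a small $W_3$ would force too many small parts of $\mathcal{P}^*$ inside $W_3$, pushing $\sum e(V_i,V_j)$ above the allowed bound. (ii) Since $\theta_3 = 0$, the characteristic polynomial of $\tilde{B}_L(\pi)$ factors as $x\bigl(x^2 - T x + S\bigr)$ with $T = \mathrm{tr}\bigl(\tilde{B}_L(\pi)\bigr)$ and, after a short computation using $\alpha + \beta = \kappa'(G)$,
\[
S \;=\; \alpha\bigl(2\kappa'(G) - \alpha\bigr)\Bigl( \tfrac{1}{|W_1||W_2|} + \tfrac{1}{|W_1||W_3|} + \tfrac{1}{|W_2||W_3|} \Bigr).
\]
Hence $\theta_1 = \tfrac{1}{2}\bigl(T + \sqrt{T^2 - 4S}\bigr)$. (iii) Substitute the size bounds from (i) and the cut estimate from $\mathcal{P}^*$, then optimize over $\alpha \in [0, \kappa'(G)]$ and $\kappa'(G) \in [1, \delta]$; the resulting extremum produces the constant $\tfrac{16}{3(\delta+1)}$.

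The main obstacle I anticipate is step (i), specifically securing $|W_3| \ge \delta+1$. The $\mathcal{G}_1$ hypothesis alone only guarantees $W_3 \ne \emptyset$, so the lower bound on $|W_3|$ must be extracted from the combined pressure of the witness partition $\mathcal{P}^*$ and the degree hypothesis $\delta \ge 2k+2$. This will require a careful case analysis on how the connected pieces $V_i$ distribute among $W_1, W_2, W_3$, and the hypothesis $\delta \ge 2k+2$ is precisely what rules out the degenerate small-$W_3$ configurations. Once the size bounds are in hand, step (iii) becomes routine $3\times 3$ linear algebra, and the constant $\tfrac{16}{3}$ emerges from the extremal balance in the resulting optimization.
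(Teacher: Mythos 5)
Your proposal has the right machinery (contradiction via Theorem \ref{thm:1.1}, a $3$-part quotient matrix of $L(G)$, Cauchy interlacing $\mu_{n-2}(G)\le\lambda_1$ of the quotient), but it applies that machinery to the wrong partition, and this sinks the quantitative conclusion. You build the quotient matrix from the $\mathcal{G}_1$-witness sets $W_1,W_2,W_3$ with $e(W_i,V\setminus W_i)=\kappa'(G)$ for $i=1,2$. The only upper bound available on the entries of that matrix is $\kappa'(G)\le\min_i r_i<2\bigl(k+\tfrac{\delta-1}{\delta}\bigr)$, i.e.\ $\kappa'(G)\le 2k+1$. Now take the admissible configuration $|W_1|=|W_2|=|W_3|=\delta+1$, $\alpha=\kappa'$, $\beta=0$: your own formula gives $T=\tfrac{4\kappa'}{\delta+1}$, $S=\tfrac{3\kappa'^2}{(\delta+1)^2}$, hence $\theta_1=\tfrac{3\kappa'}{\delta+1}$, which can be as large as $\tfrac{3(2k+1)}{\delta+1}=\tfrac{6k+3}{\delta+1}$. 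The inequality $6k+3\le\tfrac{16}{3}\bigl(k+\tfrac{\delta-1}{\delta}\bigr)$ is equivalent to $2k\le 16\cdot\tfrac{\delta-1}{\delta}-9<7$, so your step (iii) already fails for every $k\ge 3$ (and is only barely true at $k=2$). The optimization you assert ``produces the constant $\tfrac{16}{3(\delta+1)}$'' does not; there is no way to recover Theorem \ref{thm:1.2} from this partition. A secondary symptom of the same problem: your argument never uses the hypothesis $\mu_{n-2}(G)>\tfrac{4k}{\delta+1}$ structurally, whereas the paper needs it (via Lemma \ref{lem:2.4}) to force every part of the witness partition to have boundary at least $k+1$.

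The paper's proof instead takes the quotient partition from the fractional-packing witness itself. Ordering the boundaries $r_1\le\cdots\le r_s$ of $\mathcal{P}^*$ and letting $t$ be the number of parts with $r_i<2\bigl(k+\tfrac{\delta-1}{\delta}\bigr)$, Lemma \ref{lem:2.4} (this is the actual role of $G\in\mathcal{G}_1$) gives $r_i\ge k+1$ and hence $t\ge 3$; an averaging argument then yields the much stronger bound $r_1+r_2\le\tfrac{4(t-1)}{t}\bigl(k+\tfrac{\delta-1}{\delta}\bigr)$, and each small-boundary part has at least $\delta+1$ vertices by Lemma \ref{lem:2.3}, so $|V(G)\setminus(V_1\cup V_2)|\ge(t-2)(\delta+1)$. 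The trace of the quotient matrix on $(V_1,V_2,V')$ is then at most $\tfrac{4(t-1)^2}{t(t-2)}\cdot\tfrac{k+\frac{\delta-1}{\delta}}{\delta+1}\le\tfrac{16}{3}\cdot\tfrac{k+\frac{\delta-1}{\delta}}{\delta+1}$ --- note the constant $\tfrac{16}{3}$ comes from minimizing $\tfrac{(t-1)^2}{t(t-2)}$ at $t=3$, not from optimizing over $\alpha$ and $\kappa'$. If you want to salvage your write-up, replace $W_1,W_2,W_3$ by $V_1,V_2,V'$ as above and use $\mathcal{G}_1$ only through Lemma \ref{lem:2.4}; the $\theta_1\le\operatorname{tr}$ bound then already suffices and your finer $\theta_1=\tfrac12(T+\sqrt{T^2-4S})$ computation is not needed.
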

\noindent\begin{theorem} \label{thm:1.3}
	Let $k \ge 2$ be an integer and let $G \in \mathcal{G}_2$ be a graph with minimum degree $\delta \ge 3k+3$. If
	\[
	\mu_{n-3}(G) > \frac{9(k+\frac{\delta-1}{\delta})}{\delta+1},
	\]
	then $G$ has property $P(k, \delta)$.
\end{theorem}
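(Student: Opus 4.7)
The plan is proof by contradiction, using Theorem~\ref{thm:1.1} to pass from the eigenvalue hypothesis to a fractional-packing statement and then applying Haemers' quotient-matrix interlacing. Suppose $G\in\mathcal{G}_2$ satisfies the eigenvalue hypothesis but does not have property $P(k,\delta)$. Theorem~\ref{thm:1.1} then forces $\nu_f(G)\le k+\tfrac{\delta-1}{\delta}$, so there is a partition $\mathcal{P}^*=\{U_1,\dots,U_p\}$ of $V(G)$ with $\sum_{1\le i<j\le p}e(U_i,U_j)\le\bigl(k+\tfrac{\delta-1}{\delta}\bigr)(p-1)$. I would combine this with the three minimum-cut sets $V_1,V_2,V_3$ (and $V_4:=V(G)\setminus(V_1\cup V_2\cup V_3)\ne\emptyset$) supplied by $G\in\mathcal{G}_2$ to produce a four-part partition $\mathcal{Q}=\{W_1,W_2,W_3,W_4\}$ with $V_i\subseteq W_i$, inheriting the edge-sum bound (with $p$ replaced by $4$), and with $e(W_i,V(G)\setminus W_i)\ge\kappa'(G)$ for $i=1,2,3$.

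The reduction from $\mathcal{P}^*$ to $\mathcal{Q}$ is the combinatorial heart of the argument. If $p\ge 4$, I would merge parts of $\mathcal{P}^*$ so that each $V_i$ is swallowed by exactly one $W_i$; since merging only removes contributions from the cut sum, the edge-sum bound transfers to $\mathcal{Q}$. The cases $p\le 3$ require extra care: one must show either that such partitions are excluded by the hypothesis $\delta\ge 3k+3$, or that the three extremal cuts supplied by $V_1,V_2,V_3$ can be used to refine $\mathcal{P}^*$ into a four-part partition with a still-favorable cut sum.

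Next, I consider the quotient matrix $B$ of $L(G)$ with respect to $\mathcal{Q}$ and its symmetrization $S$ via $\mathrm{diag}(\sqrt{|W_i|})$, so $S_{ii}=e(W_i,V\setminus W_i)/|W_i|$ and $S_{ij}=-e(W_i,W_j)/\sqrt{|W_i||W_j|}$. The vector $(\sqrt{|W_1|},\dots,\sqrt{|W_4|})^T$ is a null vector of $S$, and the other three eigenvalues $\beta_1\ge\beta_2\ge\beta_3\ge 0$ are nonnegative by interlacing with $L(G)\succeq 0$. Haemers' interlacing gives $\mu_{n-3}(G)\le\beta_1$, so the theorem reduces to proving $\beta_1\le\tfrac{9(k+(\delta-1)/\delta)}{\delta+1}$. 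For this, I use the Rayleigh-quotient characterization
\[
\beta_1=\max_{\substack{x\in\mathbb{R}^4\\ \sum_i|W_i|x_i=0}}\frac{\sum_{i<j}e(W_i,W_j)(x_i-x_j)^2}{\sum_i|W_i|x_i^2},
\]
bounding the numerator by the cut-sum inequality on $\mathcal{Q}$ and the denominator from below using the minimum-degree constraint $\delta\ge 3k+3$ (which, via $e(W_i,V\setminus W_i)\ge|W_i|(\delta-|W_i|+1)$ combined with the total-cut bound, rules out small $|W_i|$). A $4\times 4$ convexity/optimization step then produces the factor $9/(\delta+1)$.

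The main obstacle I anticipate is the partition-reduction step: constructing $\mathcal{Q}$ with the required edge-cut properties while handling the delicate small-$p$ cases and mergings where multiple $V_i$ could end up in the same block of $\mathcal{P}^*$. The subsequent spectral estimate is a routine, if somewhat involved, Rayleigh-quotient optimization parallel to the $3\times 3$ analysis underlying Theorem~\ref{thm:1.2}.
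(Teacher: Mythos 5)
Your high-level skeleton (contradiction via Theorem~\ref{thm:1.1}, then quotient-matrix interlacing against a four-part partition, then a trace/Rayleigh bound) matches the paper, but the combinatorial reduction you place at the heart of the argument has a genuine gap. Merging a $p$-part partition $\mathcal{P}^*$ into a four-part partition $\mathcal{Q}$ does \emph{not} let you ``inherit the edge-sum bound with $p$ replaced by $4$'': merging only gives $\sum_{i<j}e(W_i,W_j)\le\sum_{i<j}e(U_i,U_j)\le\bigl(k+\tfrac{\delta-1}{\delta}\bigr)(p-1)$, which for large $p$ is far weaker than the bound $3\bigl(k+\tfrac{\delta-1}{\delta}\bigr)$ you need for the final estimate. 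Moreover, after an arbitrary merge there is no reason each $W_i$ has cut at most $\delta-1$, so you cannot invoke Lemma~\ref{lem:2.3} to force $|W_i|\ge\delta+1$, and the denominators in your Rayleigh quotient are uncontrolled. The paper avoids both problems by never merging arbitrarily: it first applies Lemma~\ref{lem:2.5} (which uses the eigenvalue hypothesis a second time, beyond the final interlacing step) to show every part of $\mathcal{P}^*$ has cut $r_i\ge k+1$; it then sorts the $r_i$, proves at least three parts satisfy $r_i<2\bigl(k+\tfrac{\delta-1}{\delta}\bigr)$ (hence $r_i\le\delta-1$ and $|V_i|\ge\delta+1$), and uses the fact that every other part contributes at least $2\bigl(k+\tfrac{\delta-1}{\delta}\bigr)$ to $\sum_i r_i=2\sum_{i<j}e(V_i,V_j)$ to squeeze out $r_1+r_2+r_3\le\tfrac{6(t-1)}{t}\bigl(k+\tfrac{\delta-1}{\delta}\bigr)$. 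The fourth block is then $V'=V(G)\setminus(V_1\cup V_2\cup V_3)$, whose size is bounded below not by a cut argument but by counting the other small-cut parts it contains. Your proposal uses neither Lemma~\ref{lem:2.5} nor this sorting/averaging device, and without them the needed numerator and denominator bounds do not follow.

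The case $p=3$, which you defer as ``requiring extra care,'' is in fact a substantive separate piece of the proof, not a routine check: the paper devotes a structural lemma (Lemma~\ref{lem:4.1}) to it, using the three extremal cuts guaranteed by $G\in\mathcal{G}_2$ to split one component with an additional edge set $X'$ of size $|X'|\le\kappa'(G)\le r_1$, so that the refined four-part partition has total cut at most $3r_1+r_2+r_3\le\tfrac{5(4k+3)}{3}$ and every block of size at least $\delta+1$. Getting the extra cut edges charged against $r_1$ (the \emph{smallest} cut) is exactly what keeps the final constant at $9/(\delta+1)$; a refinement that merely produces ``a still-favorable cut sum'' in an unspecified sense would not close the argument. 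So the proposal identifies the right obstacles but does not supply the ideas that overcome them.
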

Additionally, as a supplement, in Section 5 we extend the results of Cai and Zhou $\cite{RF10}$ regarding $\lambda_2(G)$ to the matrices of the forms $aD(G) + A(G)$ and $aD(G) + bA(G)$, where $a$ and $b$ are real numbers satisfying $a\ge 0$ and $b \neq 0$.

\section{Preliminaries}
In this section, we present some of the preliminaries and known results to be used in this paper.

Let $B$ be a real symmetric matrix of order $n$. Suppose that the rows and columns of $B$ are partitioned according to a partition $\pi = \{X_1, X_2, \ldots, X_t\}$ of the index set $\{1, 2, \ldots, n\}$. The quotient matrix of $B$ with respect to $\pi$, denoted by $B_{\pi}$, is the $t \times t$ matrix whose $(i,j)$-entry is the average row sum of the block $B_{ij}$ formed by the intersection of rows in $X_i$ and columns in $X_j$.

Let $V(G)$ be the vertex set of a graph $G$. Suppose that $V(G)$ is partitioned into $t$ non-empty subsets $V_1, V_2, \ldots, V_t$. Let $M_t$ denote the quotient matrix of the Laplacian matrix $L(G)$ with respect to this partition. The entries $m_{ij}$ of $M_t$ are given by
\[
	m_{ij} = \begin{cases}
	\sum_{1\le k \neq i \le t } \frac{e(V_i, V_k)}{|V_i|}, & \text{if } i = j, \\
	-\frac{e(V_i, V_j)}{|V_i|}, & \text{if } i \neq j.
\end{cases}
\] 
Recall that the sum of eigenvalues of a matrix equals its trace. Thus, for the quotient matrix $M_t$, we have
\[ 
	\sum_{i=1}^{t} \lambda_i(M_t) = \text{tr}(M_t),
\]
where $\lambda_1(M_t) \ge \dots \ge \lambda_t(M_t)$ are the eigenvalues of $M_t$.

The relationship between the eigenvalues of a matrix and its quotient matrix is given by the celebrated Cauchy Interlacing Theorem. Before stating it, we define the concept of interlacing.

Consider two sequences of real numbers $\theta_1 \ge \theta_2 \ge \dots \ge \theta_n$ and $\eta_1 \ge \eta_2 \ge \dots \ge \eta_m$ with $m < n$. The second sequence is said to \emph{interlace} the first one if
\[ 
\theta_i \ge \eta_i \ge \theta_{n-m+i}, \quad \text{for } i=1, 2, \ldots, m. 
\]
\noindent\begin{theorem}[Cauchy Interlacing Theorem \cite{RF11,RF12}]\label{thm:2.1} 
	Let $M$ be a real symmetric matrix. Then the eigenvalues of every quotient matrix of $M$ interlace the eigenvalues of $M$. 
\end{theorem}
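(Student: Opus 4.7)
The plan is to prove the interlacing by passing through an auxiliary symmetric matrix that is similar to the quotient matrix $B_\pi$ and is directly amenable to the Courant--Fischer min--max principle. Let $n$ and $t$ be the orders of $M$ and $B_\pi$, respectively, and let the partition be $\pi=\{X_1,\ldots,X_t\}$. I would define the $n\times t$ characteristic matrix $P$ by $P_{ij}=1$ if $i\in X_j$ and $0$ otherwise, so that $P^{T}P=\operatorname{diag}(|X_1|,\ldots,|X_t|)=:\Delta$ is a positive diagonal matrix. A direct calculation then shows $B_\pi=\Delta^{-1}P^{T}MP$, which matches the averaged-row-sum definition in the excerpt.

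Next I would normalize by setting $S=P\Delta^{-1/2}$, so that $S$ has orthonormal columns ($S^{T}S=I_t$) and its column space $V$ is the span of the characteristic vectors of $X_1,\ldots,X_t$. The symmetric matrix $\widehat{M}:=S^{T}MS=\Delta^{-1/2}P^{T}MP\Delta^{-1/2}$ satisfies $B_\pi=\Delta^{-1/2}\widehat{M}\Delta^{1/2}$, so $B_\pi$ and $\widehat{M}$ are similar and share the same eigenvalues. This reduces the problem to comparing the spectrum of the symmetric matrix $\widehat{M}$ with that of $M$.

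The second step is the Poincar\'e separation theorem, a direct consequence of the Courant--Fischer min--max characterization, applied to the pair $(\widehat{M},M)$. For any $y\in\mathbb{R}^t$ and $x:=Sy\in V$ one has $x^{T}Mx=y^{T}\widehat{M}y$ and $\|x\|^{2}=\|y\|^{2}$, so for every $i\in\{1,\ldots,t\}$,
\[
\lambda_i(\widehat{M})=\max_{\substack{W\subseteq V\\ \dim W=i}}\min_{0\neq x\in W}\frac{x^{T}Mx}{x^{T}x}.
\]
Since this max ranges over fewer $i$-dimensional subspaces than in the unrestricted Courant--Fischer formula for $\lambda_i(M)$, one immediately gets $\lambda_i(\widehat{M})\le\lambda_i(M)$. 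For the lower bound I would use the dual min--max formulation of $\lambda_i(\widehat{M})$ together with the dimension count $\dim(U\cap V)\ge(n-t+i)+t-n=i$ valid for every $U\subseteq\mathbb{R}^n$ with $\dim U=n-t+i$; this forces $\lambda_i(\widehat{M})\ge\lambda_{n-t+i}(M)$. Combining with $\lambda_i(\widehat{M})=\lambda_i(B_\pi)$ gives the interlacing statement.

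The main obstacle is the lower-bound half $\lambda_i(\widehat{M})\ge\lambda_{n-t+i}(M)$: the subtlety lies in selecting the correct min--max (or equivalently max--min) formulation so that the intersection-of-subspaces argument matches the right index shift $n-t+i$. Everything else is bookkeeping---verifying the identity $B_\pi=\Delta^{-1}P^{T}MP$, the similarity $B_\pi=\Delta^{-1/2}\widehat{M}\Delta^{1/2}$, and the Rayleigh-quotient identity $x^{T}Mx=y^{T}\widehat{M}y$ for $x=Sy$.
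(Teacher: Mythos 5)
The paper does not prove this theorem; it quotes it as a known result from the cited references (Brouwer--Haemers and Haemers' interlacing paper). Your argument is correct and is in fact the standard proof given in those sources: normalize the characteristic matrix to $S=P\Delta^{-1/2}$ with $S^{T}S=I_t$, observe that $B_\pi$ is similar to the symmetric compression $S^{T}MS$, and apply Courant--Fischer (Poincar\'e separation) to obtain $\lambda_i(M)\ge\lambda_i(S^{T}MS)\ge\lambda_{n-t+i}(M)$; the index-shift bookkeeping in your lower bound via $\dim(U\cap V)\ge i$ is the right one.
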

\noindent\begin{theorem}[Weyl's Inequalities $\cite{RF11}$]\label{thm:2.2} 
	Let $A$ and $B$ be Hermitian matrices of order $n$, and let $1 \le i, j \le n$.
	\begin{enumerate}[{\rm(i)}]
		\item If $i + j - 1 \le n$, then
		$\lambda_{i+j-1}(A + B) \le \lambda_i(A) + \lambda_j(B)$;
		\item If $i + j - n \ge 1$, then
		$\lambda_i(A) + \lambda_j(B) \le \lambda_{i+j-n}(A + B)$.
	\end{enumerate}
\end{theorem}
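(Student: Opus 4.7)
The plan is to derive both inequalities from the Courant--Fischer min-max characterization of Hermitian eigenvalues, together with the elementary dimension bound $\dim(U \cap W) \ge \dim U + \dim W - n$ for two subspaces of $\mathbb{C}^n$.

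For part (i), where $i+j-1 \le n$, I would let $U_A$ be the span of eigenvectors of $A$ corresponding to $\lambda_i(A), \lambda_{i+1}(A), \ldots, \lambda_n(A)$, so that $\dim U_A = n-i+1$ and $x^*Ax \le \lambda_i(A)\, x^*x$ for every $x \in U_A$. Define $U_B$ analogously for $B$ with $\dim U_B = n-j+1$. Then $\dim(U_A \cap U_B) \ge n-(i+j-1)+1$, and every $x$ in this intersection satisfies $x^*(A+B)x \le (\lambda_i(A)+\lambda_j(B))\, x^*x$. Substituting any subspace of $U_A \cap U_B$ of the required dimension into the min-max formula
\[
\lambda_{i+j-1}(A+B) \;=\; \min_{\dim U = n-(i+j-1)+1}\; \max_{0 \ne x \in U}\; \frac{x^*(A+B)x}{x^*x}
\]
then yields (i).

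Part (ii), which requires $i+j-n \ge 1$, I would obtain by a duality step: apply (i) to the Hermitian matrices $-A$ and $-B$ with indices $i' = n-i+1$ and $j' = n-j+1$, using the identity $\lambda_k(-M) = -\lambda_{n-k+1}(M)$. The hypothesis $i'+j'-1 \le n$ is then equivalent to $i+j-n \ge 1$, and rearranging the resulting inequality gives exactly (ii). Alternatively, one can mirror the argument for (i) using the max-min form of Courant--Fischer, taking $U_A$ and $U_B$ to be the spans of the top $i$ and top $j$ eigenvectors respectively, so that $x^*Ax \ge \lambda_i(A)\, x^*x$ on $U_A$, $x^*Bx \ge \lambda_j(B)\, x^*x$ on $U_B$, and $\dim(U_A \cap U_B) \ge i+j-n$ feeds into the max-min formula for $\lambda_{i+j-n}(A+B)$. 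The proof is conceptually short; the main thing I would be careful about is the index bookkeeping, making sure that the dimension count from the intersection bound lines up exactly with the subspace dimension $n-k+1$ (or $k$, in the dual form) demanded by the extremal characterization of the target eigenvalue of $A+B$.
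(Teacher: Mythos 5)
Your proof is correct. The paper does not actually prove this statement --- it quotes Weyl's inequalities as a known result from the reference [Brouwer--Haemers], so there is no in-paper argument to compare against; your Courant--Fischer/dimension-count derivation of (i), and the deduction of (ii) either by the duality $\lambda_k(-M)=-\lambda_{n-k+1}(M)$ or by the mirrored max--min argument, is exactly the standard textbook proof, and the index bookkeeping you flag ($\dim(U_A\cap U_B)\ge n-(i+j-1)+1$ matching the subspace dimension required for $\lambda_{i+j-1}(A+B)$, and $\dim(U_A\cap U_B)\ge i+j-n$ in the dual form) checks out.
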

\noindent\begin{lemma}[Lemma 2.8 in $\cite{RF2}$]\label{lem:2.3} 
	Let $G$ be a connected graph with minimum degree $\delta$, and let $U$ be a non-empty proper subset of $V(G)$. If $e(U, V \setminus U) \le \delta - 1$, then $|U| \ge \delta + 1$.
\end{lemma}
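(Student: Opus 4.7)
The plan is to argue by contradiction via a direct edge-counting bound: assume $1 \le |U| \le \delta$ and show that the cut size $e(U, V \setminus U)$ is then forced to be at least $\delta$, contradicting the hypothesis $e(U, V \setminus U) \le \delta - 1$.

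First I would apply the standard degree identity
\[
e(U, V \setminus U) \;=\; \sum_{u \in U} d_G(u) \;-\; 2\,e(G[U]),
\]
where $e(G[U])$ denotes the number of edges with both endpoints in $U$. Using $d_G(u) \ge \delta$ for each $u \in U$ and the trivial bound $e(G[U]) \le \binom{|U|}{2}$, this yields
\[
e(U, V \setminus U) \;\ge\; |U|\,\delta - |U|\bigl(|U|-1\bigr) \;=\; |U|\bigl(\delta - |U| + 1\bigr).
\]

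Next I would invoke a one-line concavity observation: the quadratic $f(x) = x(\delta - x + 1) = -x^2 + (\delta+1)x$ is concave, so on the interval $[1,\delta]$ it attains its minimum at the endpoints, where $f(1) = f(\delta) = \delta$. Consequently, if $1 \le |U| \le \delta$ then $e(U, V \setminus U) \ge \delta$, contradicting the hypothesis. Hence $|U| \ge \delta + 1$, as claimed.

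There is no real obstacle here --- the argument is elementary and essentially two lines long. I note in passing that the connectedness hypothesis on $G$ is not actually used, so the conclusion continues to hold for any graph of minimum degree $\delta$.
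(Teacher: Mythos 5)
Your argument is correct: counting, for each $u\in U$, at least $\delta-(|U|-1)$ neighbours outside $U$ gives $e(U,V\setminus U)\ge |U|(\delta-|U|+1)\ge\delta$ whenever $1\le |U|\le\delta$, which contradicts the hypothesis, and your observation that connectedness is never used is also right. The paper does not prove this lemma at all --- it simply cites it as Lemma 2.8 of Gu, Lai, Li and Yao --- and your edge-counting proof is essentially the standard argument given there, so nothing further is needed.
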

The following lemma is obtained from Theorem 3.4 in \cite{RF6} by replacing the parameter $k$ with $k+1$.
\noindent\begin{lemma}[Adapted from Theorem 3.4 in $\cite{RF6}$]\label{lem:2.4} 
	 Let $G \in \mathcal{G}_1$ be a graph with minimum degree $\delta \ge 2k+1 \ge 5$. Suppose that $X \subseteq E(G)$ is an edge subset such that $G-X$ is disconnected. Let $G'$ be any connected component of $G-X$. If $\mu_{n-2}(G) > \frac{4k}{\delta+1}$, then $e(V(G'), V(G) \setminus V(G')) \ge k+1$.
\end{lemma}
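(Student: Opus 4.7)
The plan is to argue by contradiction, using the $\mathcal{G}_1$ structure to produce a three-part vertex partition whose Laplacian quotient matrix has small trace, and then apply Cauchy interlacing. Suppose, contrary to the claim, that $e(V(G'), V(G) \setminus V(G')) \le k$. Since $(V(G'), V(G) \setminus V(G'))$ is an edge cut of $G$, this gives $\kappa'(G) \le k$. Because $G \in \mathcal{G}_1$, there exist disjoint non-empty proper subsets $W_1, W_2 \subset V(G)$ with $V_3 := V(G) \setminus (W_1 \cup W_2) \ne \emptyset$ and $e(W_i, V(G) \setminus W_i) = \kappa'(G) \le k$ for $i = 1, 2$. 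I take $\pi = \{W_1, W_2, V_3\}$ as the partition.

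Next I would check that every part is large enough to control the quotient matrix. Since $\kappa'(G) \le k \le \delta - 1$, Lemma \ref{lem:2.3} immediately yields $|W_1|, |W_2| \ge \delta + 1$. For $V_3$, the decomposition $e(V_3, V(G) \setminus V_3) \le e(W_1, V(G) \setminus W_1) + e(W_2, V(G) \setminus W_2) \le 2\kappa'(G) \le 2k$, together with the assumption $\delta \ge 2k+1$, gives $e(V_3, V(G) \setminus V_3) \le \delta - 1$, so Lemma \ref{lem:2.3} also gives $|V_3| \ge \delta + 1$. Thus every part of $\pi$ has size at least $\delta + 1$, while the boundaries of the three parts have at most $k$, $k$, and $2k$ edges respectively.

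Now let $M_3$ be the quotient matrix of $L(G)$ with respect to $\pi$ (writing $V_1 = W_1$, $V_2 = W_2$). Its trace is
\[
	\text{tr}(M_3) = \sum_{i=1}^{3} \frac{e(V_i, V(G) \setminus V_i)}{|V_i|} \le \frac{k + k + 2k}{\delta+1} = \frac{4k}{\delta+1}.
\]
Because $M_3$ is similar (via conjugation by $D_\pi^{1/2}$ with $D_\pi = \text{diag}(|V_1|, |V_2|, |V_3|)$) to the symmetric positive semidefinite matrix $D_\pi^{-1/2} L_H D_\pi^{-1/2}$, where $L_H$ is the Laplacian of the weighted graph on three vertices with edge weights $e(V_i, V_j)$, all eigenvalues of $M_3$ are non-negative. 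Hence $\lambda_1(M_3) \le \text{tr}(M_3) \le \frac{4k}{\delta+1}$, and Cauchy interlacing (Theorem \ref{thm:2.1}) then yields $\mu_{n-2}(G) \le \lambda_1(M_3) \le \frac{4k}{\delta+1}$, contradicting the hypothesis. The main obstacle is keeping $V_3$ large: if $|V_3|$ were of constant order, the contribution $2k/|V_3|$ would dominate the trace and the bound would fail. The hypothesis $\delta \ge 2k+1$ is calibrated precisely so that $2\kappa'(G) \le \delta - 1$, letting Lemma \ref{lem:2.3} control $|V_3|$ in addition to $|W_1|$ and $|W_2|$; the specific component $V(G')$ enters only to certify $\kappa'(G) \le k$.
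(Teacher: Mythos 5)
Your argument is correct. Note, however, that the paper does not actually prove Lemma \ref{lem:2.4}: it imports it verbatim from Theorem 3.4 of \cite{RF6} after the substitution $k \mapsto k+1$, so there is no in-paper proof to compare against. What you have written is a sound self-contained reconstruction, and it uses exactly the machinery the paper deploys for its main theorems (a three-part partition supplied by the $\mathcal{G}_1$ structure, the trace of the Laplacian quotient matrix, non-negativity of the quotient eigenvalues, and Cauchy interlacing to get $\mu_{n-2}(G) \le \lambda_1(M_3) \le \operatorname{tr}(M_3)$). All the individual steps check out: the contradiction hypothesis $e(V(G'), V(G)\setminus V(G')) \le k$ does certify $\kappa'(G) \le k$ because $V(G')$ is a non-empty proper subset; the bound $e(V_3, V(G)\setminus V_3) \le 2\kappa'(G) \le 2k \le \delta - 1$ is exactly where the hypothesis $\delta \ge 2k+1$ is used, so Lemma \ref{lem:2.3} applies to all three parts; and your similarity argument for the non-negativity of the eigenvalues of $M_3$ is valid (the paper's own shortcut, $\lambda_i(M_3) \ge \mu_{n-3+i}(G) \ge 0$ by interlacing, would do the same job with less work). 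The only cosmetic remark is that Lemma \ref{lem:2.3} is stated for connected graphs; if $\kappa'(G) = 0$ the trace of $M_3$ vanishes outright and the contradiction is immediate, while if $\kappa'(G) \ge 1$ the graph is connected, so no real gap arises.
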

Similarly, the next lemma is obtained from Theorem 4.4 in \cite{RF6} by substituting the parameter $k$ with $k+1$.
\noindent\begin{lemma}[Adapted from Theorem 4.4 in $\cite{RF6}$]\label{lem:2.5} 
	Let $G \in \mathcal{G}_2$ be a graph with minimum degree $\delta \ge 3k+1 \ge 7$. Suppose that $X \subseteq E(G)$ is an edge subset such that $G-X$ is disconnected. Let $G'$ be any connected component of $G-X$. If $\mu_{n-3}(G) > \frac{6k}{\delta+1}$, then $e(V(G'), V(G) \setminus V(G')) \ge k+1$.
\end{lemma}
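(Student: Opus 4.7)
The lemma is labeled as being adapted from Theorem 4.4 in $\cite{RF6}$ by the substitution $k \mapsto k+1$, so my plan is simply to invoke that theorem with the shifted parameter. Writing the original theorem with parameter $k'$, its hypotheses take the form $G\in\mathcal{G}_2$, $\delta \ge 3k'-2$ (with $\delta \ge 4$ ensuring $k' \ge 2$), and $\mu_{n-3}(G) > \frac{6(k'-1)}{\delta+1}$, yielding $e(V(G'), V(G)\setminus V(G')) \ge k'$. Setting $k' = k+1$ turns the three numerical conditions into $\delta \ge 3k+1$, $\delta \ge 7$ (i.e.\ $k \ge 2$), and $\mu_{n-3}(G) > \frac{6k}{\delta+1}$, and the conclusion into $e(V(G'), V(G)\setminus V(G')) \ge k+1$, which is exactly the lemma.

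To give intuition for why the substitution is clean, I would sketch the scheme used in $\cite{RF6}$. Argue by contradiction: suppose $e(V(G'), V(G)\setminus V(G')) \le k$. Using that $G\in\mathcal{G}_2$, pick three pairwise disjoint proper subsets $V_1, V_2, V_3$ of $V(G)$ with $V(G)\setminus\bigcup_i V_i \neq \emptyset$ and $e(V_i, V(G)\setminus V_i) = \kappa'(G)$ for each $i$. Combine these with $V(G')$ to form a partition of $V(G)$ into four nonempty parts $X_1, X_2, X_3, X_4$, each of which can be arranged to have size at least $\delta+1$ by Lemma~\ref{lem:2.3}, since each has a small boundary.

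Now form the Laplacian quotient matrix $M_4$ with respect to this partition and apply Cauchy interlacing (Theorem~\ref{thm:2.1}) to get $\mu_{n-3}(G) \le \lambda_4(M_4)$. Bound $\lambda_4(M_4)$ from above using $\sum_{i}\lambda_i(M_4) = \operatorname{tr}(M_4)$ together with the explicit diagonal entries $m_{ii} = \sum_{j\neq i} \frac{e(X_i,X_j)}{|X_i|}$; the lower bound $|X_i|\ge \delta+1$ and the control on $\sum_{j\neq i} e(X_i, X_j)$ in terms of $k$ and $\kappa'(G)$ together force $\lambda_4(M_4) \le \frac{6k}{\delta+1}$, contradicting $\mu_{n-3}(G) > \frac{6k}{\delta+1}$.

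The main obstacle is the partition step: one needs disjointness and nonemptiness of $X_1, \ldots, X_4$ regardless of how $V(G')$ meets $V_1, V_2, V_3$, and it is precisely this case analysis that pins down both the coefficient $6$ in the numerator and the threshold $\delta \ge 3k+1$. Once it is carried out in $\cite{RF6}$, the reparametrization $k\mapsto k+1$ propagates through every inequality unchanged, which is why the adapted statement requires no new proof beyond the citation.
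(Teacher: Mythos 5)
Your proposal matches the paper exactly: the paper offers no proof of Lemma \ref{lem:2.5} beyond the one-line remark that it is Theorem 4.4 of \cite{RF6} with $k$ replaced by $k+1$, and your reparametrization $k'=k+1$ reproduces precisely the stated hypotheses and conclusion. One correction to your supplementary sketch: Cauchy interlacing (Theorem \ref{thm:2.1}) for a $4\times 4$ quotient matrix gives $\lambda_1(M_4)\ge \mu_{n-3}(G)$, not $\mu_{n-3}(G)\le\lambda_4(M_4)$, and the trace bound is then applied to $\lambda_1(M_4)$ after noting that the other three eigenvalues of $M_4$ are nonnegative because they interlace the nonnegative Laplacian spectrum --- this is exactly the pattern used in the paper's own proofs of Theorems \ref{thm:1.2} and \ref{thm:1.3}.
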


\section{Laplacian eigenvalue $\mu_{n-2}(G)$ and property $P(k,\delta)$ for graphs in  $\mathcal{G}_1$}
In this section, we provide the proof of Theorem \ref{thm:1.2}, which establishes a sufficient condition for a graph $G \in \mathcal{G}_1$ to have property $P(k, \delta)$ in terms of the third smallest Laplacian eigenvalue $\mu_{n-2}(G)$.

\begin{proof}[Proof of Theorem \ref{thm:1.2}]
	For a contradiction, suppose that $G$ does not have property $P(k, \delta)$. By Theorem \ref{thm:1.1}, this implies that the fractional packing number satisfies
	\[ 
	\nu_f(G) \le k + \frac{\delta - 1}{\delta}. 
	\]
	By the definition of $\nu_f(G)$, there exists some partition $\mathcal{P} = \{V_1, V_2, \ldots, V_s\}$ of $V(G)$ with $s \ge 2$ such that
	\begin{equation} \label{eq:1}
		\sum_{1 \le i < j \le s} e(V_i, V_j) \le \left( k + \frac{\delta - 1}{\delta} \right) (s - 1).
	\end{equation}
	
	Let $r_i = e(V_i, V(G) \setminus V_i)$ for $i = 1, 2, \ldots, s$. Note that the set of edges $\bigcup_{1 \le i < j \le s} E(V_i, V_j)$ forms an edge cut of $G$ that separates $G$ into components containing $G[V_1], G[V_2], \ldots, G[V_s]$. Since $\delta \ge 2k+2$ and $\mu_{n-2}(G) > \frac{16(k+\frac{\delta-1}{\delta})}{3(\delta+1)}$, we have $\mu_{n-2}(G) > \frac{4k}{\delta+1}$. 
	Thus, the condition of Lemma \ref{lem:2.4} is satisfied. Applying Lemma \ref{lem:2.4}, we obtain $r_i \ge k+1$ for all $i = 1, 2, \ldots, s$.
	
	Furthermore, by $\sum_{i=1}^s r_i = 2 \sum_{1 \le i < j \le s} e(V_i, V_j)$ and Inequality \eqref{eq:1}, we have
	\begin{equation} \label{eq:2}
		\sum_{i=1}^{s} r_i \le 2 \left( k + \frac{\delta - 1}{\delta} \right) (s - 1).
	\end{equation}
	Without loss of generality, assume that $r_1 \le r_2 \le \cdots \le r_s$.
	
	Let $t$ be the largest integer such that $r_t < 2 \left( k + \frac{\delta - 1}{\delta} \right).$ We claim that $t \ge 3$.
	
	Indeed, if $t=1$, then 
	\[
	\sum_{i=1}^{s}r_i=r_1+\sum_{i=2}^{s}r_i\ge r_1+2(k+\frac{\delta-1}{\delta})(s-1)>2(k+\frac{\delta-1}{\delta})(s-1),
	\]
	which contradicts Inequality \eqref{eq:1}.
	
	If $t=2$, then 
	\[
	\begin{aligned}
		\sum_{i=1}^{s} r_i
		&= r_1 + r_2 + \sum_{i=3}^{s} r_i \\
		&\ge r_1 + r_2 + 2\!\left(k+\frac{\delta-1}{\delta}\right)(s-2) \\
		&\ge 2(k+1) + 2\!\left(k+\frac{\delta-1}{\delta}\right)(s-2)
		> 2\!\left(k+\frac{\delta-1}{\delta}\right)(s-1),
	\end{aligned}
	\]
	which contradicts Inequality \eqref{eq:1} again. Thus, $t \ge 3$ must hold.

	Since $r_i \ge r_2$ for $2 \le i \le t$ and $r_i \ge 2(k+\frac{\delta-1}{\delta})$ for $i > t$, we obtain
	\[ 
	r_1 + (t-1)r_2 + 2 \left( k + \frac{\delta - 1}{\delta} \right)(s - t) \le \sum_{i=1}^s r_i \le 2 \left( k + \frac{\delta - 1}{\delta} \right)(s - 1). 
	\]
	Using the inequality $r_2 \ge \frac{r_1+r_2}{2}$, we have
	\[ 
	r_1 + r_2 \le 2 \left( k + \frac{\delta - 1}{\delta} \right)(t - 1) - (t-2)r_2 \le 2 \left( k + \frac{\delta - 1}{\delta} \right)(t - 1) - (t-2)\frac{r_1+r_2}{2}. 
	\]
	It follows that
	\begin{equation} \label{eq:3}
		r_1 + r_2 \le \frac{4(t-1)}{t} \left( k + \frac{\delta - 1}{\delta} \right).
	\end{equation}
		
	Since $\delta \ge 2k+2$, we have $r_t < 2 \left( k + \frac{\delta - 1}{\delta} \right) < \delta.$ This implies $r_i \le \delta - 1$ for all $i = 1, 2, \ldots, t$. By Lemma \ref{lem:2.3}, it follows that $|V_i| \ge \delta + 1$ for $i = 1, 2, \ldots, t$. Let $V' = V(G) \setminus (V_1 \cup V_2)$. Then $|V'| \ge \sum_{i=3}^t |V_i| \ge (t-2)(\delta+1).$
	
	Let $r_{12} = e(V_1, V_2)$ and $r'_i = e(V_i, V')$ for $i=1, 2$. Consider the quotient matrix $M_3$ of the Laplacian matrix $L(G)$ with respect to the partition $(V_1, V_2, V')$, which is given by
	\[
	M_3 = 
	\begin{pmatrix}
		\frac{r_{1}}{|V_1|} & -\frac{r_{12}}{|V_1|} & -\frac{r'_{1}}{|V_1|} \\
		-\frac{r_{12}}{|V_2|} & \frac{r_{2}}{|V_2|} & -\frac{r'_{2}}{|V_2|} \\
		-\frac{r'_{1}}{|V'|} & -\frac{r'_{2}}{|V'|} & \frac{r'_{1}+r'_{2}}{|V'|}
	\end{pmatrix}.
	\]
	By Theorem \ref{thm:2.1}, we have $\lambda_i(M_3) \ge \mu_{n-3+i}(G) \ge 0$ for $i=1,2,3$.
	By the trace property $\text{tr}(M_3) = \sum_{i=1}^3 \lambda_i(M_3)$, together with Inequality \eqref{eq:3}, we obtain
	\begin{align*}
		\lambda_1(M_3) \le \text{tr}(M_3)
		&= \frac{r_1}{|V_1|} + \frac{r_2}{|V_2|} + \frac{r'_1+r'_2}{|V'|} \\
		&\le \frac{r_1+r_2}{\delta+1} + \frac{r_1+r_2}{(t-2)(\delta+1)} \\
		&= \frac{t-1}{t-2}\cdot\frac{(r_1+r_2)}{\delta+1} \\
		&\le \frac{4(t-1)^2}{t(t-2)}\cdot\frac{(k+\frac{\delta-1}{\delta})}{\delta+1}.
	\end{align*}
	Since $t \ge 3$, the function $f(t) = \frac{(t-1)^2}{t(t-2)}$ is decreasing, and thus bounded by $f(3) = \frac{4}{3}$. Then we obtain
	\[
	\mu_{n-2}(G) \le \lambda_1(M_3) \le \frac{4}{3} \cdot \frac{4(k+\frac{\delta-1}{\delta})}{\delta+1} = \frac{16(k+\frac{\delta-1}{\delta})}{3(\delta+1)},
	\] 
	which is contrary to the assumption in Theorem \ref{thm:1.2}. This completes the proof.
\end{proof}

Having established the relationship between $\mu_{n-2}(G)$ and property $P(k, \delta)$, it is natural to extend this result to more general graph matrices. Let $a$ and $b$ be two real numbers such that $a \ge -1$, $b \neq 0$, and $\frac{a}{b} \ge -1$. In the following corollary, we extend the conclusion of Theorem \ref{thm:1.2} to the spectra of the matrices $aD(G) + A(G)$ and $aD(G) + bA(G)$. Let $\lambda_i(G, a)$ and $\lambda_i(G, a, b)$ denote the $i$-th largest eigenvalue of the matrix $aD(G) + A(G)$ and $aD(G) + bA(G)$, respectively. 
\noindent\begin{corollary} \label{cor:3.1}
	Let $k \ge 2$ be an integer and let $G \in \mathcal{G}_1$ be a graph with minimum degree $\delta \ge 2k+2$. Then $G$ has property $P(k, \delta)$ if one of the following conditions holds:
	\begin{enumerate}[{\rm(i)}]
		\item $\lambda_3(G, a) < (a+1)\delta - \frac{16(k+\frac{\delta-1}{\delta})}{3(\delta+1)}$;
		\item If $b > 0$, $\lambda_3(G, a, b) < (a+b)\delta - \frac{16b(k+\frac{\delta-1}{\delta})}{3(\delta+1)}$;
		\item If $b < 0$, $\lambda_{n-2}(G, a, b) > (a+b)\delta - \frac{16b(k+\frac{\delta-1}{\delta})}{3(\delta+1)}$.
	\end{enumerate}
\end{corollary}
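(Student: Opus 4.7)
The plan is to reduce all three cases to Theorem \ref{thm:1.2} by exploiting the identity
\[
aD(G) + A(G) = (a+1)D(G) - L(G),
\]
so that any spectral hypothesis on $aD(G)+A(G)$ can be converted, via Weyl's inequalities (Theorem \ref{thm:2.2}), into a lower bound on the third smallest Laplacian eigenvalue $\mu_{n-2}(G)$; once that bound matches the threshold in Theorem \ref{thm:1.2}, the conclusion follows immediately.

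For case (i), I would apply part (ii) of Weyl's inequalities to the decomposition $aD(G)+A(G) = (a+1)D(G) + (-L(G))$ with indices $i=n$ and $j=3$, for which $i+j-n = 3 \ge 1$. Since $a\ge -1$, the matrix $(a+1)D(G)$ is diagonal with nonnegative entries, so its smallest eigenvalue is $(a+1)\delta$; and eigenvalues of $-L(G)$ are the negatives of those of $L(G)$ in reverse order, giving $\lambda_3(-L(G)) = -\mu_{n-2}(G)$. Weyl therefore yields
\[
(a+1)\delta - \mu_{n-2}(G) \;\le\; \lambda_3(G,a).
\]
Rearranging and combining with the hypothesis of (i) produces $\mu_{n-2}(G) > \frac{16(k+\frac{\delta-1}{\delta})}{3(\delta+1)}$, and Theorem \ref{thm:1.2} then guarantees that $G$ has property $P(k,\delta)$.

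Cases (ii) and (iii) would be obtained by rescaling. Write $aD(G) + bA(G) = b\bigl(\tfrac{a}{b}D(G) + A(G)\bigr)$; since $\tfrac{a}{b} \ge -1$, case (i) already applies to the inner matrix. If $b>0$, scalar multiplication preserves the ordering of eigenvalues, so $\lambda_3(G,a,b) = b\,\lambda_3(G,a/b)$, and the hypothesis of (ii) is simply $b$ times the case-(i) inequality (with $a$ replaced by $a/b$). If $b<0$, scalar multiplication reverses the ordering, so the third largest eigenvalue of $\tfrac{a}{b}D+A$ becomes the third smallest of $aD+bA$, that is $\lambda_{n-2}(G,a,b) = b\,\lambda_3(G,a/b)$; multiplying the case-(i) inequality through by the negative number $b$ flips its direction and yields exactly the hypothesis of (iii).

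Nothing in this argument requires substantive calculation; the points to be careful about are verifying the index constraint $i+j-n \ge 1$ when invoking Weyl's inequality, tracking that $\lambda_3(-L(G))$ corresponds to $-\mu_{n-2}(G)$ rather than $-\mu_3(G)$, and correctly handling the re-indexing from ``third largest'' to ``third smallest'' caused by the sign flip in case (iii).
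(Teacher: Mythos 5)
Your proposal is correct and follows essentially the same route as the paper: the identical Weyl inequality application ($\lambda_3(aD+A) \ge \lambda_n((a+1)D) + \lambda_3(-L) = (a+1)\delta - \mu_{n-2}(G)$) combined with Theorem \ref{thm:1.2}, followed by the same rescaling for cases (ii) and (iii). The only difference is that you argue directly while the paper phrases it as a proof by contradiction, which is immaterial.
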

\begin{proof}
	For a contradiction, suppose that $G$ does not have property $P(k, \delta)$. By Theorem \ref{thm:1.2}, we have $\mu_{n-2}(G) \le \frac{16(k+\frac{\delta-1}{\delta})}{3(\delta+1)}$.
	
	Note that $L(G) = D(G) - A(G)$, so $aD(G) + A(G) = (a+1)D(G) - L(G)$. By Weyl's Inequalities (Theorem \ref{thm:2.2}(ii)), we have
	\begin{align*}
		\lambda_3(G, a) 
		&\ge \lambda_n((a+1)D(G)) + \lambda_3(-L(G)) \\
		&= (a+1)\delta - \mu_{n-2}(G) \\
		&\ge (a+1)\delta - \frac{16(k+\frac{\delta-1}{\delta})}{3(\delta+1)},
	\end{align*}
	which contradicts condition (i). Thus, (i) holds.
	
	Note that $aD(G) + bA(G) = b(\frac{a}{b}D(G) + A(G))$.
	If $b > 0$, then $\lambda_i(G, a, b) = b\lambda_i(G, \frac{a}{b})$. Condition (ii) then follows directly from (i) by substituting $a$ with $\frac{a}{b}$ and multiplying by $b$.
	If $b < 0$, then the order of eigenvalues is reversed, so $\lambda_{n-i+1}(G, a, b) = b\lambda_i(G, \frac{a}{b})$. Specifically, for $i=3$, we have $\lambda_{n-2}(G, a, b) = b\lambda_3(G, \frac{a}{b})$. Condition (iii) follows similarly.
\end{proof}

Note that $\lambda_3(G, 0, 1) = \lambda_3(G)$ and $\lambda_3(G, 1, 1) = q_3(G)$. So we obtain the following corollary by corollary \ref{cor:3.1} directly.
\noindent\begin{corollary}\label{cor:3.2} 
	Let $k \ge 2$ be an integer and let $G \in \mathcal{G}_1$ be a graph with minimum degree $\delta \ge 2k+2$. The following statements hold:
	\begin{enumerate}[{\rm(i)}]
		\item If $\lambda_{3}(G)<\delta-\frac{16(k+\frac{\delta-1}{\delta})}{3(\delta+1)}$, then $G$ has property $P(k, \delta)$;
		\item If $q_{3}(G)<2\delta-\frac{16(k+\frac{\delta-1}{\delta})}{3(\delta+1)}$, then $G$ has property $P(k, \delta)$.
	\end{enumerate}
\end{corollary}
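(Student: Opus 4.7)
The plan is to derive both parts as immediate consequences of Corollary \ref{cor:3.1}(ii), simply by specializing the parameters $(a,b)$. The key observation is the identification $\lambda_3(G) = \lambda_3(G, 0, 1)$ and $q_3(G) = \lambda_3(G, 1, 1)$, which follows directly from $A(G) = 0\cdot D(G) + 1\cdot A(G)$ and $Q(G) = 1\cdot D(G) + 1\cdot A(G)$, as already noted just before the statement.

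For part (i), I would substitute $a = 0$ and $b = 1$ into Corollary \ref{cor:3.1}. These values satisfy the admissibility conditions $a \ge -1$, $b \neq 0$, $a/b \ge -1$, and moreover $b > 0$, so part (ii) of that corollary is the correct branch. Plugging in turns the generic inequality $\lambda_3(G, a, b) < (a+b)\delta - \frac{16b(k+\frac{\delta-1}{\delta})}{3(\delta+1)}$ into precisely $\lambda_3(G) < \delta - \frac{16(k+\frac{\delta-1}{\delta})}{3(\delta+1)}$, so the hypothesis of (i) triggers property $P(k,\delta)$. For part (ii), I would repeat the argument with $a = 1$ and $b = 1$; the admissibility and positivity conditions are again trivially met, and the resulting bound is exactly $q_3(G) < 2\delta - \frac{16(k+\frac{\delta-1}{\delta})}{3(\delta+1)}$.

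Because the corollary is a direct specialization of an already-proved result, there is no substantive obstacle. The only care required is checking that the two substitutions each yield $b > 0$, so that Corollary \ref{cor:3.1}(ii) (rather than (iii)) applies; both $(0,1)$ and $(1,1)$ pass this test immediately, and the proof terminates.
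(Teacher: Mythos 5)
Your proposal is correct and matches the paper's own treatment exactly: the paper also derives Corollary \ref{cor:3.2} by specializing Corollary \ref{cor:3.1}(ii) to $(a,b)=(0,1)$ and $(a,b)=(1,1)$, using the identifications $\lambda_3(G,0,1)=\lambda_3(G)$ and $\lambda_3(G,1,1)=q_3(G)$. The parameter checks you perform ($b>0$ and the admissibility conditions) are all that is needed.
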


\section{Laplacian eigenvalue $\mu_{n-3}(G)$ and property $P(k,\delta)$ for graphs in  $\mathcal{G}_2$}
In this section, we provide the proof of Theorem \ref{thm:1.3}, which establishes a sufficient condition for a graph $G \in \mathcal{G}_2$ to have property $P(k, \delta)$ in terms of the fourth smallest Laplacian eigenvalue $\mu_{n-3}(G)$. 

To handle the case where the partition size is small, we require a structural lemma that allows us to further decompose the graph. The following lemma is an adaptation of Theorem 4.5 in \cite{RF6}, with parameters adjusted to fit our condition for property $P(k,\delta)$.
\begin{lemma}[Adapted from Theorem 4.5 in \cite{RF6}] \label{lem:4.1}
	Let $k \ge 2$ be an integer and $G \in \mathcal{G}_2$ be a connected graph with minimum degree $\delta \ge 3k + 3$. Suppose $X \subseteq E(G)$ is an edge cut such that $G-X$ has exactly 3 components $G_1, G_2, G_3$. Let $r_i = e(V(G_i), V(G) \setminus V(G_i))$. If $\sum_{i=1}^3 r_i \le 4k + 3$ and $k + 1 \le r_i \le 2k + 1$ for each $i$, then there exists an additional edge subset $X' \subseteq E(G) \setminus X$ such that $G - (X \cup X')$ has 4 connected components, $|X'|\le \kappa'(G)$ and each component of $G - (X \cup X')$ has at least $\delta + 1$ vertices.
\end{lemma}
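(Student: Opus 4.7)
The plan is to refine the three-component decomposition by further splitting one of the $V(G_i)$ using an internal cut of size at most $\kappa'(G)$, producing two pieces each of size at least $\delta + 1$. Specifically, I aim to find a subset $A \subset V(G_i)$ for some $i$ such that $\emptyset \ne A \subsetneq V(G_i)$, $e_{G_i}(A, V(G_i) \setminus A) \le \kappa'(G)$, and $\min\{|A|, |V(G_i) \setminus A|\} \ge \delta + 1$; then $X' := E_{G_i}(A, V(G_i) \setminus A)$ will be the desired edge set, and $G - (X \cup X')$ will consist of the four components $A$, $V(G_i) \setminus A$, and the two other $V(G_{i'})$.

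I would begin with a basic size estimate. Since $\delta \ge 3k+3$ and each $r_i \le 2k+1$, we have $r_i \le \delta - 2$, so Lemma \ref{lem:2.3} applied to each $V(G_i)$ gives $|V(G_i)| \ge \delta + 1$. Furthermore, $\kappa'(G) \le r_i \le \delta - 2$, so any subset $U \subset V(G)$ with $e(U, V(G) \setminus U) \le \kappa'(G)$ satisfies $\min\{|U|, |V(G) \setminus U|\} \ge \delta + 1$. I would then invoke the hypothesis $G \in \mathcal{G}_2$ to obtain three pairwise disjoint non-empty proper subsets $W_1, W_2, W_3 \subset V(G)$ with $V(G) \setminus \bigcup_j W_j \ne \emptyset$ and $e(W_j, V(G) \setminus W_j) = \kappa'(G)$ for each $j$.

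The core of the argument is an uncrossing step based on the submodularity of the edge-cut function:
\[
e(P \cap Q, V(G) \setminus (P \cap Q)) + e(P \cup Q, V(G) \setminus (P \cup Q)) \le e(P, V(G) \setminus P) + e(Q, V(G) \setminus Q).
\]
Applied to $P = W_j$ and $Q = V(G_i)$, this shows that whenever $W_j \cap V(G_i)$ is a proper non-empty subset of $V(G_i)$, its cut in $G$ is at most $\kappa'(G)$, and consequently the internal cut $e_{G_i}(W_j \cap V(G_i), V(G_i) \setminus (W_j \cap V(G_i)))$ is also at most $\kappa'(G)$. A case analysis on how $W_1, W_2, W_3$ distribute among the three components $V(G_1), V(G_2), V(G_3)$ -- using the disjointness of the $W_j$'s, the non-coverage condition $V(G) \setminus \bigcup_j W_j \ne \emptyset$, and the quantitative constraints $\sum_i r_i \le 4k+3$ with $k+1 \le r_i \le 2k+1$ -- identifies at least one pair $(W_j, V(G_i))$ giving a valid candidate $A := W_j \cap V(G_i)$.

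The main obstacle I expect is verifying the two-sided size condition $\min\{|A|, |V(G_i) \setminus A|\} \ge \delta + 1$. The bound $|A| \ge \delta + 1$ follows directly from the preliminary estimate applied to $A$ as a minimum cut of $G$, but the complementary bound $|V(G_i) \setminus A| \ge \delta + 1$ is delicate: a direct appeal to Lemma \ref{lem:2.3} would require $e(V(G_i) \setminus A, V(G) \setminus (V(G_i) \setminus A)) \le \delta - 1$, whereas this quantity can be as large as $\kappa'(G) + r_i \le 4k+2$, exceeding $\delta - 1 = 3k+2$. To circumvent this, I would either apply a second uncrossing to $V(G_i) \cup W_j$ to realize $V(G_i) \setminus A$ itself as part of a minimum cut of $G$, or choose $A$ so that $V(G_i) \setminus A$ contains a distinct $W_{j'}$ disjoint from $A$ and hence inherits the $\ge \delta + 1$ bound from $|W_{j'}|$. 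It is here that the hypotheses $\sum r_i \le 4k+3$ and the three-fold structure of $\mathcal{G}_2$ become essential, as they force enough of the $W_j$'s to pile up inside a single component to make such a selection possible.
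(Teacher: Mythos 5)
Your overall strategy is the same as the paper's (split one $V(G_i)$ along the trace of a set witnessing $G\in\mathcal{G}_2$, using that the internal cut has size at most $\kappa'(G)$), but the proof has a genuine gap exactly at the point you flag as the ``main obstacle,'' and the two workarounds you sketch are not the resolution. The missing observation is that $\kappa'(G)\le\min\{r_1,r_2,r_3\}$, because each $V(G_i)$ is itself a non-empty proper subset of $V(G)$. With this, each of the two pieces $A$ and $V(G_i)\setminus A$ has $G$-boundary at most $|X'|+r_i\le \kappa'(G)+\max_j r_j\le \min_j r_j+\max_j r_j\le \sum_j r_j-\min_j r_j\le (4k+3)-(k+1)=3k+2\le\delta-1$, so Lemma \ref{lem:2.3} applies \emph{uniformly to both pieces} and gives $|A|,|V(G_i)\setminus A|\ge\delta+1$. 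Your estimate $\kappa'(G)+r_i\le 4k+2$ is too crude precisely because it bounds $\kappa'(G)$ by $2k+1$ instead of by $\min_j r_j$ and never uses $\sum_j r_j\le 4k+3$; this is where the hypotheses $\sum_j r_j\le 4k+3$ and $r_j\ge k+1$ enter, not through a second uncrossing or through forcing a $W_{j'}$ into $V(G_i)\setminus A$ (the latter cannot be guaranteed in general). Note also that your claim that $|A|\ge\delta+1$ ``follows directly'' because $A$ is a minimum-cut side is unjustified: $A=W_j\cap V(G_i)$ need not be a side of a minimum cut, so it requires the same boundary estimate as its complement.

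Two smaller points. First, the submodularity step is both unnecessary and, as stated, too weak: submodularity applied to $W_j$ and $V(G_i)$ only yields $e(W_j\cap V(G_i),V(G)\setminus(W_j\cap V(G_i)))\le r_i$, not $\le\kappa'(G)$; the fact you actually need, namely $e_{G_i}(W_j\cap V(G_i),V(G_i)\setminus W_j)\le\kappa'(G)$, follows immediately from the containment $E_{G_i}(W_j\cap V(G_i),V(G_i)\setminus W_j)\subseteq E(W_j,V(G)\setminus W_j)$. Second, since $G_i[A]$ or $G_i[V(G_i)\setminus A]$ may be disconnected, deleting the full internal cut could leave more than four components; as in the paper, one should take $X'$ to be a subset of that internal cut chosen so that $G_i-X'$ has exactly two components (the boundary estimate above still applies to these components, since their internal boundary is at most $|X'|\le\kappa'(G)$). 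The existence of a suitable pair $(W_j,V(G_i))$ with $\emptyset\ne W_j\cap V(G_i)\subsetneq V(G_i)$ is a clean pigeonhole on the four non-empty parts $W_1,W_2,W_3,V(G)\setminus\bigcup_j W_j$ versus the three components, which your ``case analysis'' gestures at but should be made explicit.
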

\begin{proof}
	Since $G \in \mathcal{G}_2$, there exist three non-empty disjoint proper subsets $V_1, V_2, V_3$ satisfying $V(G) \setminus (V_1 \cup V_2 \cup V_3) \neq \emptyset$ and $\kappa'(G) = e(V_i, V(G) \setminus V_i)$ for $i=1,2,3$. Let $V' = V(G) \setminus (V_1 \cup V_2 \cup V_3)$. Since $r_i = e(V(G_i), V(G) \setminus V(G_i))$ for $i=1,2,3$, we have $\kappa'(G) \le \min\{r_1, r_2, r_3\}$.
	
	Obviously, there exists a component $G_i$ ($i=1,2,3$) such that $V(G_i)$ has non-empty intersections with at least two sets from $\{V_1, V_2, V_3, V'\}$. Without loss of generality, we assume that $G_1$ has a proper subset $V'_1 = V(G_1) \cap V_1 \neq \emptyset$. Let $X_1 = E(V'_1, V(G_1) \setminus V'_1)$. Then $c(G_1 - X_1) \ge 2$ and $|X_1| \le \kappa'(G)$. Thus, there exists an edge subset $X' \subseteq X_1$ satisfying $c(G_1 - X') = 2$. Let $G_1^1, G_1^2$ be the components of $G_1 - X'$. Then $c(G - X - X') = 4$ and the components of $G - X - X'$ are $G_1^1, G_1^2, G_2, G_3$.
	
	Since $r_i \le 2k+1 \le \delta-1$, we have $|V(G_i)| \ge \delta+1$ for $i=1,2,3$ by Lemma \ref{lem:2.3}. For $G_1^i$ ($i=1,2$), we have
	\begin{align*}
		e(V(G_1^i), V(G) \setminus V(G_1^i)) &= e(V(G_1^i), V(G_1) \setminus V(G_1^i)) + e(V(G_1^i), V \setminus V(G_1)) \\
		&\le |X'| + \max\{r_1, r_2, r_3\} \\
		&\le \kappa'(G) + \max\{r_1, r_2, r_3\} \\
		&\le \min\{r_1, r_2, r_3\} + \max\{r_1, r_2, r_3\} \\
		&\le \sum_{i=1}^3 r_i - \min\{r_1, r_2, r_3\} \\
		&\le 4k + 3 - (k + 1) \\
		&= 3k + 2 \\
		&\le \delta-1.
	\end{align*}
	By Lemma \ref{lem:2.3}, we obtain $|V(G_1^i)| \ge \delta+1$ for $i=1,2$. This completes the proof.
\end{proof}
We now provide the proof of Theorem \ref{thm:1.3}.
\begin{proof}[Proof of Theorem \ref{thm:1.3}]
	For a contradiction, suppose that $G$ does not have property $P(k, \delta)$. By Theorem \ref{thm:1.1}, this implies that the fractional packing number satisfies
	\[ 
	\nu_f(G) \le k + \frac{\delta - 1}{\delta}. 
	\]
	By the definition of $\nu_f(G)$, there exists some partition $\mathcal{P} = \{V_1, V_2, \ldots, V_s\}$ of $V(G)$ with $s \ge 2$ such that
	\begin{equation} \label{eq:4}
		\sum_{1 \le i < j \le s} e(V_i, V_j) \le \left( k + \frac{\delta - 1}{\delta} \right) (s - 1).
	\end{equation}

	Let $r_i = e(V_i, V(G) \setminus V_i)$ for $i = 1, 2, \ldots, s$. Note that the set of edges $\bigcup_{1 \le i < j \le s} E(V_i, V_j)$ forms an edge cut of $G$ that separates $G$ into components containing $G[V_1], G[V_2], \ldots, G[V_s]$. Since $\delta \ge 3k+3$ and $\mu_{n-3}(G) > \frac{9(k+\frac{\delta-1}{\delta})}{\delta+1}$, we have $\mu_{n-3}(G) > \frac{6k}{\delta+1}$. 
	Thus, the condition of Lemma \ref{lem:2.5} is satisfied. Applying Lemma \ref{lem:2.5}, we obtain $r_i \ge k+1$ for all $i = 1, 2, \ldots, s$.
	
	Furthermore, by $\sum_{i=1}^s r_i = 2 \sum_{1 \le i < j \le s} e(V_i, V_j)$ and Inequality \eqref{eq:4}, we have
	\begin{equation} \label{eq:5}
		\sum_{i=1}^{s} r_i \le 2 \left( k + \frac{\delta - 1}{\delta} \right) (s - 1).
	\end{equation}
	Without loss of generality, assume that $r_1 \le r_2 \le \cdots \le r_s$.
	
	Let $t$ be the largest integer such that $r_t < 2 \left( k + \frac{\delta - 1}{\delta} \right).$ We claim that $t \ge 3$.
	
	Indeed, if $t=1$, then 
	\[
	\sum_{i=1}^{s}r_i=r_1+\sum_{i=2}^{s}r_i\ge r_1+2(k+\frac{\delta-1}{\delta})(s-1)>2(k+\frac{\delta-1}{\delta})(s-1),
	\]
	which contradicts Inequality \eqref{eq:5}.
	
	If $t=2$, then 
	\[
	\begin{aligned}
		\sum_{i=1}^{s} r_i
		&= r_1 + r_2 + \sum_{i=3}^{s} r_i \\
		&\ge r_1 + r_2 + 2\!\left(k+\frac{\delta-1}{\delta}\right)(s-2) \\
		&\ge 2(k+1) + 2\!\left(k+\frac{\delta-1}{\delta}\right)(s-2)
		> 2\!\left(k+\frac{\delta-1}{\delta}\right)(s-1),
	\end{aligned}
	\]
	which contradicts Inequality \eqref{eq:5}. Thus, $t \ge 3$ holds.
	
	\noindent\textbf{Case 1. $s\ge t\ge 4$.}

	Since $r_i \ge r_3$ for $3 \le i \le t$ and $r_i \ge 2(k+\frac{\delta-1}{\delta})$ for $i > t$, we obtain
	\[
	r_1 + r_2 + (t-2)r_3 + 2 \left( k + \frac{\delta - 1}{\delta} \right)(s - t) \le \sum_{i=1}^s r_i \le 2 \left( k + \frac{\delta - 1}{\delta} \right)(s - 1). 
	\]
	Using the Inequality $r_3 \ge \frac{r_1+r_2+r_3}{3}$, we have
	\[ 
	r_1 + r_2 + r_3 \le 2 \left( k + \frac{\delta - 1}{\delta} \right)(t - 1) - (t-3)r_3 \le 2 \left( k + \frac{\delta - 1}{\delta} \right)(t - 1) - (t-3)\frac{r_1+r_2+r_3}{3}. 
	\]
	It follows that
	\begin{equation} \label{eq:6}
		r_1 + r_2 + r_3 \le \frac{6(t-1)}{t} \left( k + \frac{\delta - 1}{\delta} \right).
	\end{equation}
	
	Let $V' = V(G) \setminus (V_1 \cup V_2 \cup V_3)$. Then $|V'| \ge \sum_{i=4}^t |V_i| \ge (t-3)(\delta+1).$
	
	Let $r_{ij} = e(V_i, V_j)$ and $r'_i = e(V_i, V')$ for $1\le i, j\le 3$. Consider the quotient matrix $M_4$ of the Laplacian matrix $L(G)$ with respect to the partition $(V_1, V_2, V_3, V')$, which is given by
	\[
	M_4 = 
	\begin{pmatrix}
		\frac{r_1}{|V_1|} & -\frac{r_{12}}{|V_1|} & -\frac{r_{13}}{|V_1|} & -\frac{r'_1}{|V_1|} \\
		-\frac{r_{12}}{|V_2|} & \frac{r_2}{|V_2|} & -\frac{r_{23}}{|V_2|} & -\frac{r'_2}{|V_2|} \\
		-\frac{r_{13}}{|V_3|} & -\frac{r_{23}}{|V_3|} & \frac{r_3}{|V_3|} & -\frac{r'_3}{|V_3|} \\
		-\frac{r'_1}{|V'|} & -\frac{r'_2}{|V'|} & -\frac{r'_3}{|V'|} & \frac{\sum_{i=1}^3 r'_i}{|V'|}
	\end{pmatrix}.
	\]
	
	By Theorem \ref{thm:2.1}, we have $\lambda_i(M_4) \ge \mu_{n-4+i}(G) \ge 0$ for $i=1,2,3,4$.
	By the trace property $\text{tr}(M_4) = \sum_{i=1}^4 \lambda_i(M_4)$, together with Inequality \eqref{eq:6}, we have
	\begin{align*}
		\lambda_1(M_4) \le \text{tr}(M_4)
		&= \frac{r_1}{|V_1|} + \frac{r_2}{|V_2|} + \frac{r_3}{|V_3|} + \frac{r'_1+r'_2+r'_3}{|V'|} \\
		&\le \frac{r_1+r_2+r_3}{\delta+1} + \frac{r_1+r_2+r_3}{(t-3)(\delta+1)} \\
		&= \frac{t-2}{t-3}\cdot\frac{(r_1+r_2+r_3)}{\delta+1} \\
		&\le \frac{6(t-1)(t-2)}{t(t-3)}\cdot\frac{(k+\frac{\delta-1}{\delta})}{\delta+1}.
	\end{align*}
	For $t \ge 4$, it is straightforward to verify that $\frac{(t-1)(t-2)}{t(t-3)} \le \frac{3}{2}$ (equality holds when $t=4$). Consequently,
	\[
	\mu_{n-3}(G) \le \lambda_1(M_4) \le 6 \cdot \frac{3}{2} \cdot \frac{(k+\frac{\delta-1}{\delta})}{\delta+1} = \frac{9(k+\frac{\delta-1}{\delta})}{\delta+1},
	\] 
	which is contrary to the assumption in Theorem \ref{thm:1.3}.
	
	\noindent\textbf{Case 2. $t = 3, s\ge 4$.}
	
	Since $t=3$, we know $r_i \ge 2(k + \frac{\delta-1}{\delta})$ for all $i \ge 4$. So we have
	\[
	\sum_{i=1}^{3}r_i + 2(k + \frac{\delta-1}{\delta})(s-3) \le \sum_{i=1}^{s}r_i \le 2(k + \frac{\delta-1}{\delta})(s-1).
	\]
	This gives
	\begin{equation} \label{eq:7}
		r_1+r_2+r_3 \le 4\left(k + \frac{\delta-1}{\delta}\right).
	\end{equation}
	
	Recall that $r_i \ge k+1$ for all $i = 1, 2, \ldots, s$. If $r_4 \ge 3k+3$, then
	\begin{align*}
		\sum_{i=1}^{s}r_i &= \sum_{i=1}^{3}r_i + \sum_{i=4}^s r_i \\
		&\ge 3(k+1) + (3k+3)(s-3) \\
		&= 3(k+1)(s-2).
	\end{align*}
	But we also have
	\[
	\sum_{i=1}^{s}r_i \le 2(k + \frac{\delta-1}{\delta})(s-1) < 2(k+1)(s-1).
	\]
	So $3(k+1)(s-2) < 2(k+1)(s-1)$, which means $s < 4$. This contradicts $s \ge 4$.
	Therefore, $r_4 \le 3k+2 \le \delta - 1$. By Lemma \ref{lem:2.3}, we have $|V_4| \ge \delta + 1$.
	
	Let $V' = V(G) \setminus (V_1 \cup V_2 \cup V_3)$. Then $|V'| \ge |V_4| \ge \delta+1$.
	
	Let $r_{ij} = e(V_i, V_j)$ and $r'_i = e(V_i, V')$ for $1\le i, j\le 3$. Consider the quotient matrix $M_4$ of the Laplacian matrix $L(G)$ with respect to the partition $(V_1, V_2, V_3, V')$, which is given by
	\[
	M_4 = 
	\begin{pmatrix}
		\frac{r_1}{|V_1|} & -\frac{r_{12}}{|V_1|} & -\frac{r_{13}}{|V_1|} & -\frac{r'_1}{|V_1|} \\
		-\frac{r_{12}}{|V_2|} & \frac{r_2}{|V_2|} & -\frac{r_{23}}{|V_2|} & -\frac{r'_2}{|V_2|} \\
		-\frac{r_{13}}{|V_3|} & -\frac{r_{23}}{|V_3|} & \frac{r_3}{|V_3|} & -\frac{r'_3}{|V_3|} \\
		-\frac{r'_1}{|V'|} & -\frac{r'_2}{|V'|} & -\frac{r'_3}{|V'|} & \frac{\sum_{i=1}^3 r'_i}{|V'|}
	\end{pmatrix}.
	\]
	
	By Theorem \ref{thm:2.1}, we have $\lambda_i(M_4) \ge \mu_{n-4+i}(G) \ge 0$ for $i=1,2,3,4$.
	By the trace property $\text{tr}(M_4) = \sum_{i=1}^4 \lambda_i(M_4)$, together with Inequality \eqref{eq:7}, we have
	\begin{align*}
		\mu_{n-3}(G) \le \lambda_1(M_4) &\le \text{tr}(M_4) 
		= \frac{r_1}{|V_1|} + \frac{r_2}{|V_2|} + \frac{r_3}{|V_3|} + \frac{r'_1+r'_2+r'_3}{|V'|} \\
		&\le \frac{r_1+r_2+r_3}{\delta+1} + \frac{r_1+r_2+r_3}{\delta+1} \\
		&\le 2\cdot \frac{4(k+\frac{\delta-1}{\delta})}{\delta+1} \\
		&= \frac{8(k+\frac{\delta-1}{\delta})}{\delta+1},
	\end{align*}
	which is contrary to the assumption in Theorem \ref{thm:1.3}.
	
	\noindent\textbf{Case 3. $t = 3, s = 3$.}

	Recall from the previous arguments that
	\[ 
	\sum_{i=1}^3 r_i \le 2 \left( k + \frac{\delta - 1}{\delta} \right) (3 - 1) = 4 \left( k + \frac{\delta - 1}{\delta} \right) < 4k+4. 
	\]
	Since $r_i$ must be an integer, we have 
	
	\begin{equation}\label{eq:8}
		r_1 + r_2 + r_3 \le 4k+3.
	\end{equation}
	Also, we have $k+1 \le r_i < 2 \left( k + \frac{\delta - 1}{\delta} \right) < 2k+2 \quad \text{for } i=1,2,3,$ which implies $k+1 \le r_i \le 2k+1$.
	Thus, the conditions of Lemma \ref{lem:4.1} are satisfied. By Lemma \ref{lem:4.1}, there exists an edge subset $X' \subseteq E(G) \setminus \bigcup_{1 \le i < j \le 3} E(V_i, V_j)$ such that $G' = G - (\bigcup_{1 \le i < j \le 3} E(V_i, V_j) \cup X')$ has exactly 4 connected components, which we denote by $U_1, U_2, U_3, U_4$, and $|U_i| \ge \delta+1$ for all $i=1, \dots, 4$.
	
	Let $r'_{ij} = e(U_i, U_j)$ and $r'_i = \sum_{1 \le j \neq i \le 4} r'_{ij}$ in graph $G$ for $1\le i, j\le 4$.
	Note that $|X'| \le \kappa'(G) \le r_1$, $\sum_{i=1}^3 r_i = 2\sum_{1 \le i < j \le 3} e(V_i, V_j)$ and $2r_1 \le \frac{2(r_1+r_2+r_3)}{3}$, we have
	
	\begin{equation}\label{eq:9}
		\sum_{i=1}^4 r'_i = 2 \left( \sum_{1 \le i < j \le 3} e(V_i, V_j) + |X'| \right) \le \sum_{i=1}^3 r_i + 2r_1 \le 3r_1 + r_2 + r_3 \le \frac{5(4k+3)}{3}. 
	\end{equation}

	Now, consider the quotient matrix $M'_4$ of $L(G)$ with respect to the partition $\{U_1, U_2, U_3, U_4\}$,
	\[
	M'_4 = 
	\begin{pmatrix}
		\frac{r'_1}{|U_1|} & -\frac{r'_{12}}{|U_1|} & -\frac{r'_{13}}{|U_1|} & -\frac{r'_{14}}{|U_1|} \\
		-\frac{r'_{12}}{|U_2|} & \frac{r'_2}{|U_2|} & -\frac{r'_{23}}{|U_2|} & -\frac{r'_{24}}{|U_2|} \\
		-\frac{r'_{13}}{|U_3|} & -\frac{r_{23}}{|U_3|} & \frac{r'_3}{|U_3|} & -\frac{r'_{34}}{|U_3|} \\
		-\frac{r'_{14}}{|U_4|} & -\frac{r'_{24}}{|U_4|} & -\frac{r'_{34}}{|U_4|} & \frac{r'_4}{|U_4|}
	\end{pmatrix}.
	\]
	By Theorem \ref{thm:2.1}, we have $\lambda_i(M'_4) \ge \mu_{n-4+i}(G) \ge 0$ for $i=1,2,3,4$.
	By the trace property $\text{tr}(M'_4) = \sum_{i=1}^4 \lambda_i(M'_4)$, together with Inequality \eqref{eq:8} and Inequality \eqref{eq:9}, we have
	\begin{align*}
		\mu_{n-3}(G) \le \lambda_1(M'_4) \le \text{tr}(M'_4) 
		&= \sum_{i=1}^4 \frac{r'_i}{|U_i|} 
		\le \frac{\sum_{i=1}^4 r'_i}{\delta+1} \\
		&\le \frac{3r_1 + r_2 + r_3}{\delta+1} \\
		&\le \frac{5(4k+3)}{3(\delta+1)} 
		< \frac{9(k+\frac{\delta-1}{\delta})}{\delta+1},
	\end{align*}
	 which is contrary to the assumption in Theorem \ref{thm:1.3}.

	This completes the proof.
\end{proof}

Similar to Corollary \ref{cor:3.1}, we extend Theorem \ref{thm:1.3} to the general matrices $aD(G)+A(G)$ and $aD(G)+bA(G)$.
\noindent\begin{corollary} \label{cor:4.2}
	Let $k \ge 2$ be an integer and let $G \in \mathcal{G}_2$ be a graph with minimum degree $\delta \ge 3k+3$. Then $G$ has property $P(k, \delta)$ if one of the following conditions holds:
	\begin{enumerate}[{\rm(i)}]
		\item $\lambda_4(G, a) < (a+1)\delta - \frac{9(k+\frac{\delta-1}{\delta})}{\delta+1}$;
		\item If $b > 0$, $\lambda_4(G, a, b) < (a+b)\delta - \frac{9b(k+\frac{\delta-1}{\delta})}{\delta+1}$;
		\item If $b < 0$, $\lambda_{n-3}(G, a, b) > (a+b)\delta - \frac{9b(k+\frac{\delta-1}{\delta})}{\delta+1}$.
	\end{enumerate}
\end{corollary}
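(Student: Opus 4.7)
The plan is to mirror the proof of Corollary \ref{cor:3.1} almost verbatim, simply substituting Theorem \ref{thm:1.3} for Theorem \ref{thm:1.2} and working with $\mu_{n-3}(G)$ in place of $\mu_{n-2}(G)$. First I would argue by contradiction: if $G$ fails property $P(k,\delta)$, then the contrapositive of Theorem \ref{thm:1.3} yields $\mu_{n-3}(G) \le \frac{9(k+\frac{\delta-1}{\delta})}{\delta+1}$.

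For part (i), I would rewrite $aD(G)+A(G) = (a+1)D(G) - L(G)$. Because $a \ge -1$, the matrix $(a+1)D(G)$ is positive semidefinite, and its smallest eigenvalue is $(a+1)\delta$. Applying Theorem \ref{thm:2.2}(ii) (Weyl) with indices $i=4$, $j=n$ then gives
\[
\lambda_4(G,a) \ge \lambda_n\bigl((a+1)D(G)\bigr) + \lambda_4(-L(G)) = (a+1)\delta - \mu_{n-3}(G) \ge (a+1)\delta - \frac{9(k+\frac{\delta-1}{\delta})}{\delta+1},
\]
where I used $\lambda_4(-L(G)) = -\mu_{n-3}(G)$. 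This contradicts hypothesis (i).

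For parts (ii) and (iii), I would use the scaling identity $aD(G)+bA(G) = b\bigl(\tfrac{a}{b}D(G) + A(G)\bigr)$, noting that the assumption $\tfrac{a}{b} \ge -1$ makes part (i) applicable with $a$ replaced by $\tfrac{a}{b}$. When $b>0$, multiplication by $b$ preserves the ordering, so $\lambda_4(G,a,b) = b\,\lambda_4(G,\tfrac{a}{b})$, and (ii) follows immediately from (i) by multiplying the derived inequality by $b$. When $b<0$, multiplication reverses the eigenvalue ordering, giving $\lambda_{n-3}(G,a,b) = b\,\lambda_4(G,\tfrac{a}{b})$, which flips the inequality sign and yields (iii).

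I do not anticipate any serious obstacle, as the argument is structurally identical to that of Corollary \ref{cor:3.1}. The only points requiring care are the correct choice of Weyl indices (so that the smallest eigenvalue $(a+1)\delta$ of $(a+1)D(G)$ is coupled with $\lambda_4(-L(G))$) and the bookkeeping of inequality directions when dividing by a negative $b$ in the passage from (i) to (iii).
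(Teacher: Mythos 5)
Your proposal is correct and follows essentially the same route as the paper: contradiction via Theorem \ref{thm:1.3}, the decomposition $aD(G)+A(G)=(a+1)D(G)-L(G)$ with Weyl's inequality (Theorem \ref{thm:2.2}(ii)) at indices $i=n$, $j=4$ to get $\lambda_4(G,a)\ge(a+1)\delta-\mu_{n-3}(G)$, and the scaling identities $\lambda_4(G,a,b)=b\lambda_4(G,\tfrac{a}{b})$ for $b>0$ and $\lambda_{n-3}(G,a,b)=b\lambda_4(G,\tfrac{a}{b})$ for $b<0$. The index bookkeeping and inequality directions all check out.
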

\begin{proof}
	For a contradiction, suppose that $G$ does not have property $P(k, \delta)$. By Theorem \ref{thm:1.3}, we have $\mu_{n-3}(G) \le \frac{9(k+\frac{\delta-1}{\delta})}{\delta+1}$.
	
	By Weyl's Inequalities (Theorem \ref{thm:2.2}(ii)), we have
	\begin{align*}
		\lambda_4(G, a) 
		&\ge \lambda_n((a+1)D(G)) + \lambda_4(-L(G)) \\
		&= (a+1)\delta - \mu_{n-3}(G) \\
		&\ge (a+1)\delta - \frac{9(k+\frac{\delta-1}{\delta})}{\delta+1},
	\end{align*}
	which contradicts condition (i). Thus, (i) holds.
	
	For the general case $b \neq 0$, note that $\lambda_i(G, a, b) = b \lambda_i(G, \frac{a}{b})$ if $b > 0$, and $\lambda_{n-i+1}(G, a, b) = b \lambda_i(G, \frac{a}{b})$ if $b < 0$. Conditions (ii) and (iii) then follow immediately from (i) by appropriate substitution and scaling.
\end{proof}

Note that $\lambda_4(G, 0, 1) = \lambda_4(G)$ and $\lambda_4(G, 1, 1) = q_4(G)$. So we obtain the following corollary by Corollary \ref{cor:4.2} directly.
\noindent\begin{corollary}\label{cor:4.3} 
	Let $k \ge 2$ be an integer and let $G \in \mathcal{G}_2$ be a graph with minimum degree $\delta \ge 3k+3$. The following statements hold:
	\begin{enumerate}[{\rm(i)}]
		\item If $\lambda_{4}(G)<\delta-\frac{9(k+\frac{\delta-1}{\delta})}{\delta+1}$, then $G$ has property $P(k, \delta)$;
		\item If $q_{4}(G)<2\delta-\frac{9(k+\frac{\delta-1}{\delta})}{\delta+1}$, then $G$ has property $P(k, \delta)$.
	\end{enumerate}
\end{corollary}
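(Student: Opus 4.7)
My plan is to derive both parts of the corollary as immediate specializations of Corollary \ref{cor:4.2}(i), choosing the parameter $a$ so that the general matrix $aD(G) + A(G)$ collapses to a classical spectral object of $G$. This mirrors the way Corollary \ref{cor:3.2} is derived from Corollary \ref{cor:3.1} in the previous section. The two identities that drive everything are $\lambda_4(G, 0) = \lambda_4(G)$ and $\lambda_4(G, 1) = q_4(G)$, coming respectively from $0 \cdot D(G) + A(G) = A(G)$ and $1 \cdot D(G) + A(G) = Q(G)$.

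For part (i), I would apply Corollary \ref{cor:4.2}(i) with $a = 0$. The admissibility hypothesis $a \ge -1$ is trivially satisfied, and the threshold $(a+1)\delta - \frac{9(k+\frac{\delta-1}{\delta})}{\delta+1}$ becomes exactly $\delta - \frac{9(k+\frac{\delta-1}{\delta})}{\delta+1}$. The assumed bound on $\lambda_4(G)$ therefore matches the hypothesis of Corollary \ref{cor:4.2}(i), which immediately yields property $P(k, \delta)$.

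For part (ii), I would apply the same corollary with $a = 1$. The threshold becomes $2\delta - \frac{9(k+\frac{\delta-1}{\delta})}{\delta+1}$, and since $\lambda_4(G, 1) = q_4(G)$, the assumed bound on $q_4(G)$ again activates Corollary \ref{cor:4.2}(i) and delivers property $P(k, \delta)$.

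There is essentially no technical obstacle: the argument consists of two direct substitutions into an already established result. The only routine checks are that $a = 0$ and $a = 1$ both satisfy $a \ge -1$, which is immediate, and that the arithmetic simplification of $(a+1)\delta$ matches the stated thresholds.
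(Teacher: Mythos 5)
Your proposal is correct and matches the paper's own derivation, which likewise obtains both parts by specializing Corollary \ref{cor:4.2} via the identities $\lambda_4(G,0,1)=\lambda_4(G)$ and $\lambda_4(G,1,1)=q_4(G)$ (equivalently, $a=0$ and $a=1$ in part (i), as you do). No further comment is needed.
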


\section{Extensions of the Spectral Condition on $\lambda_{2}(G)$}
In this section, we extend the results of Cai and Zhou $\cite{RF10}$ regarding the second largest adjacency eigenvalue $\lambda_2(G)$ to the matrices of the forms $aD(G) + A(G)$ and $aD(G) + bA(G)$, where $a\ge 0$ and $b \neq 0$. We begin by recalling their main result.	

\noindent\begin{theorem}[Theorem 1.6 in \cite{RF10}] \label{thm:5.1}
	Let $k$ be a positive integer, and let $G$ be a graph with minimum degree $\delta \ge 2k + 2$. If
	\[
	\lambda_2(G) < \delta - \frac{2(k + \frac{\delta-1}{\delta})}{\delta + 1},
	\]
	then $G$ has property $P(k, \delta)$.
\end{theorem}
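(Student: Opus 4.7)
The plan is to prove Theorem~\ref{thm:5.1} by contradiction, paralleling the scheme of Theorems~\ref{thm:1.2} and~\ref{thm:1.3} but specialized to the adjacency spectrum via a $2\times 2$ quotient matrix.

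First I would assume $G$ does not have property $P(k, \delta)$. Setting $c := k + \frac{\delta-1}{\delta}$, Theorem~\ref{thm:1.1} yields $\nu_f(G) \le c$, so there exists a partition $\{V_1, \ldots, V_s\}$ with $s \ge 2$ and $\sum_{i<j} e(V_i, V_j) \le c(s-1)$. Writing $r_i = e(V_i, V(G) \setminus V_i)$ and ordering so that $r_1 \le \cdots \le r_s$, the bound becomes $\sum r_i \le 2c(s-1)$. In particular $r_1 \le 2c < \delta$ (since $\delta \ge 2k+2$ gives $2c < 2k+2 \le \delta$), so Lemma~\ref{lem:2.3} forces $|V_1|, |V(G) \setminus V_1| \ge \delta + 1$.

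Next, I would consider the partition $\pi = (V_1, V(G) \setminus V_1)$ and its associated $2 \times 2$ adjacency quotient matrix
\[
A_\pi = \begin{pmatrix} \frac{2e(V_1)}{|V_1|} & \frac{r_1}{|V_1|} \\[3pt] \frac{r_1}{|V(G) \setminus V_1|} & \frac{2e(V(G) \setminus V_1)}{|V(G) \setminus V_1|} \end{pmatrix}.
\]
Cauchy Interlacing (Theorem~\ref{thm:2.1}) gives $\lambda_2(A(G)) \ge \lambda_2(A_\pi)$. The minimum-degree condition forces the diagonal entries to satisfy $a, d \ge \delta - r_1/(\delta+1)$, while the off-diagonal entries $b, c'$ are at most $r_1/(\delta+1)$. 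Applying the explicit $2\times 2$ formula $\lambda_2(A_\pi) = \frac{1}{2}\bigl((a+d) - \sqrt{(a-d)^2 + 4bc'}\bigr)$ together with the crude bound $\sqrt{(a-d)^2 + 4bc'} \le |a-d| + 2\sqrt{bc'}$ yields $\lambda_2(A_\pi) \ge \min(a,d) - \sqrt{bc'} \ge \delta - \frac{2 r_1}{\delta+1}$.

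The main obstacle is to sharpen the bound on $r_1$ from $r_1 \le 2c$ to $r_1 \le c$, which is required to match the coefficient $2$ in the theorem. This is immediate if the $\nu_f$-minimizing partition has $s = 2$, since then $r_1 = e(V_1, V_2) = \sum_{i<j} e(V_i, V_j) \le c$. For $s \ge 3$, however, averaging only yields $r_1 \le 2c(s-1)/s$, which may exceed $c$. I expect the heart of the proof to consist of either (i) a structural reduction of the $s \ge 3$ case to an effective $s = 2$ scenario, for instance by exhibiting a coarsening of the partition with cut at most $c$, or (ii) an alternative route via Weyl's Inequality (Theorem~\ref{thm:2.2}(ii)): $\lambda_2(A(G)) \ge \lambda_n(D(G)) + \lambda_2(-L(G)) = \delta - \mu_{n-1}(G)$, combined with the algebraic-connectivity bound $\mu_{n-1}(G) \le r_1\bigl(1/|V_1| + 1/|V(G)\setminus V_1|\bigr) \le 2r_1/(\delta+1)$ obtained from the Laplacian 2-partition quotient matrix. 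In either approach, the delicate step is the same combinatorial reduction to a single cut of size at most $c$.
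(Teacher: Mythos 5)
First, a contextual note: the paper does not prove Theorem~\ref{thm:5.1} at all --- it is imported verbatim as Theorem 1.6 of \cite{RF10} and used as a black box in Section 5 --- so there is no in-paper proof to compare against; your attempt has to be judged on its own terms.

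Your setup is sound: the contradiction via Theorem~\ref{thm:1.1}, the minimizing partition with $\sum_{i<j}e(V_i,V_j)\le c(s-1)$ for $c=k+\tfrac{\delta-1}{\delta}$, the size bounds from Lemma~\ref{lem:2.3} (which needs $G$ connected, but disconnectedness would force $\lambda_2(G)\ge\delta$ and kill the hypothesis anyway), and the estimate $\lambda_2(G)\ge\lambda_2(A_\pi)\ge\delta-\tfrac{2r_1}{\delta+1}$ from the $2\times 2$ quotient are all correct. But the gap you flag is not a technicality to be patched later --- it is the entire content of the theorem, and the two repairs you propose both fail to close it. For route (i), a coarsening of the minimizing partition into two blocks with cut at most $c$ need not exist: the quotient multigraph of the minimizing partition can be a cycle on $s$ vertices with every edge of weight $c(s-1)/s$, in which case every $2$-part coarsening has cut weight at least $2c(s-1)/s\to 2c$, and this configuration is consistent with being the $\nu_f$-minimizer (every coarser partition then has a strictly larger ratio). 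For route (ii), Weyl plus the Fiedler-type bound $\mu_{n-1}(G)\le r_1\bigl(\tfrac{1}{|V_1|}+\tfrac{1}{|V\setminus V_1|}\bigr)$ reproduces exactly the same inequality $\lambda_2(G)\ge\delta-\tfrac{2r_1}{\delta+1}$, so it faces exactly the same obstruction. In short, no argument that uses only a single $2$-cut of the graph can reach the coefficient $2$ in the theorem; at best you prove the weaker statement with $\tfrac{4(k+\frac{\delta-1}{\delta})}{\delta+1}$ in place of $\tfrac{2(k+\frac{\delta-1}{\delta})}{\delta+1}$.

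The mechanism that actually delivers the factor $2$ --- and the one used throughout this literature, including in the paper's own proofs of Theorems~\ref{thm:1.2} and~\ref{thm:1.3} --- works with two or more small-boundary parts of the partition \emph{simultaneously} rather than with one cut. Concretely: let $t$ be the number of parts with $r_i<2c$; the averaging argument in the proof of Theorem~\ref{thm:1.2} shows $t\ge 2$ and, when $t=2$, gives $r_1+r_2\le 2c$. Taking the two orthogonal normalized indicator vectors of $V_1$ and $V_2$ (or the $3$-part quotient on $(V_1,V_2,V')$) and applying Courant--Fischer/interlacing yields
\[
\lambda_2(G)\;\ge\;\min_{i=1,2}\frac{2e(V_i)}{|V_i|}-\frac{e(V_1,V_2)}{\sqrt{|V_1||V_2|}}\;\ge\;\delta-\frac{\max(r_1,r_2)+e(V_1,V_2)}{\delta+1}\;\ge\;\delta-\frac{r_1+r_2}{\delta+1},
\]
which gives the desired contradiction when $r_1+r_2\le 2c$. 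The remaining case $t\ge 3$ (where averaging only gives $r_1+r_2\le\tfrac{4(t-1)}{t}c$) still requires a separate combinatorial argument, and that is where the real work of \cite{RF10} must lie. So your proposal should be assessed as an essentially correct reduction of the problem plus an honest statement of the hard step, not as a proof; and the specific way you propose to finish it (a single cut of size at most $c$) should be abandoned in favor of the two-part compression above.
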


Based on Theorem \ref{thm:5.1}, we obtain the following results.

\noindent\begin{corollary} \label{cor:5.2}
	Let $k$ be a positive integer and let $G$ be a graph with minimum degree $\delta \ge 2k+2$ and maximum degree $\Delta$. Let $a \ge 0$ and $b \neq 0$ be real numbers. Then $G$ has property $P(k, \delta)$ if one of the following conditions holds:
	\begin{enumerate}[{\rm(i)}]
		\item $\lambda_2(G, a) < (a+1)\delta - \frac{2(k+\frac{\delta-1}{\delta})}{\delta+1}$;
		\item If $b > 0$, $\lambda_2(G, a, b) < (a+b)\delta - \frac{2b(k+\frac{\delta-1}{\delta})}{\delta+1}$;
		\item If $b < 0$, $\lambda_{n-1}(G, a, b) > a\Delta + b\delta - \frac{2b(k+\frac{\delta-1}{\delta})}{\delta+1}$.
	\end{enumerate}
\end{corollary}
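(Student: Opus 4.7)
The plan is to follow the template of Corollaries~\ref{cor:3.1} and~\ref{cor:4.2}: assume for contradiction that $G$ does not have property $P(k,\delta)$, apply Theorem~\ref{thm:5.1} to obtain the lower bound $\lambda_2(G) \ge \delta - \frac{2(k+\frac{\delta-1}{\delta})}{\delta+1}$, and then use Weyl's Inequalities (Theorem~\ref{thm:2.2}) to transfer this bound to the spectra of $aD(G)+A(G)$ and $aD(G)+bA(G)$. The role of the hypothesis $a \ge 0$ is to control the spectrum of the diagonal matrix $aD(G)$: its eigenvalues are $a d_1, \ldots, a d_n$, so $\lambda_n(aD(G)) = a\delta$ and $\lambda_1(aD(G)) = a\Delta$.

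For part~(i), I would apply Theorem~\ref{thm:2.2}(ii) with $i=n$ and $j=2$ to the decomposition $aD(G)+A(G)$, obtaining $\lambda_2(aD(G)+A(G)) \ge \lambda_n(aD(G)) + \lambda_2(A(G)) = a\delta + \lambda_2(G)$; combining with the Cai--Zhou bound then yields $\lambda_2(G,a) \ge (a+1)\delta - \frac{2(k+\frac{\delta-1}{\delta})}{\delta+1}$, which contradicts~(i). For part~(ii), since $b > 0$ and $a \ge 0$ give $\frac{a}{b} \ge 0$, I would use the identity $aD(G)+bA(G) = b\bigl(\frac{a}{b}D(G)+A(G)\bigr)$ together with the fact that positive scaling preserves eigenvalue order to write $\lambda_2(G,a,b) = b\,\lambda_2(G,\tfrac{a}{b})$, and then invoke part~(i) with parameter $\frac{a}{b}$ and multiply through by $b$.

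Part~(iii) is the delicate case: when $b < 0$, multiplication by $b$ reverses the order of the eigenvalues of $A(G)$, so the scaling trick no longer applies and one must instead use Theorem~\ref{thm:2.2}(i) to derive an \emph{upper} bound on an eigenvalue of the combined matrix. Taking $i=1$ and $j=n-1$ in Theorem~\ref{thm:2.2}(i) gives $\lambda_{n-1}(aD(G)+bA(G)) \le \lambda_1(aD(G)) + \lambda_{n-1}(bA(G)) = a\Delta + b\lambda_2(G)$, using the reversal formula $\lambda_{n-1}(bA(G)) = b\lambda_2(A(G))$. This is precisely the step where $\Delta$ must replace $\delta$, which explains the asymmetry between the bounds in~(ii) and~(iii). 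Since $b < 0$, the lower bound $\lambda_2(G) \ge \delta - \frac{2(k+\frac{\delta-1}{\delta})}{\delta+1}$ flips to $b\lambda_2(G) \le b\delta - \frac{2b(k+\frac{\delta-1}{\delta})}{\delta+1}$, producing the required upper bound to contradict~(iii).

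I do not expect any serious obstacles; the argument is essentially a mechanical application of Weyl's inequality once Theorem~\ref{thm:5.1} is used as a black box. The one point that genuinely requires care is selecting the pair of indices $(i,j)$ in Weyl's inequality so that the appropriate extreme eigenvalue of $aD(G)$ (namely $a\delta$ in cases~(i) and~(ii), and $a\Delta$ in case~(iii)) surfaces to match the asserted form of each bound.
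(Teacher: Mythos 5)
Your proposal is correct and matches the paper's proof essentially step for step: the same contradiction setup via Theorem \ref{thm:5.1}, the same application of Weyl's inequality (part (ii) with indices giving $\lambda_n(aD(G)) = a\delta$ for conditions (i)--(ii), and part (i) with $\lambda_1(aD(G)) = a\Delta$ for condition (iii)), and the same positive-scaling identity for case (ii). Nothing to add.
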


\begin{proof}
	Suppose to the contrary that $G$ does not have property $P(k, \delta)$. By Theorem \ref{thm:5.1}, we have $\lambda_2(G) \ge \delta - \frac{2(k+\frac{\delta-1}{\delta})}{\delta+1}$. 
	
	Consider the matrix $aD(G) + A(G)$. By Weyl's Inequalities (Theorem \ref{thm:2.2}(ii)), we have
	\begin{align*}
		\lambda_2(G, a) = \lambda_2(aD + A) &\ge \lambda_n(aD) + \lambda_2(A) \\
		&= a\delta + \lambda_2(G) \\
		&\ge a\delta + \delta - \frac{2(k+\frac{\delta-1}{\delta})}{\delta+1} \\
		&= (a+1)\delta - \frac{2(k+\frac{\delta-1}{\delta})}{\delta+1},
	\end{align*}
	which contradicts condition (i). Thus, (i) holds.
	
	Note that $aD(G) + bA(G) = b(\frac{a}{b}D(G) + A(G))$. If $b > 0$, we have $\lambda_2(G, a, b) = b\lambda_2(G, \frac{a}{b})$. Condition (ii) follows directly from (i) by substituting $a$ with $\frac{a}{b}$ and multiplying the inequality by $b$.
	
	Consider the case $b < 0$. By Weyl's Inequalities (Theorem \ref{thm:2.2}(i)), we have
	\begin{align*}
		\lambda_{n-1}(G, a, b) = \lambda_{n-1}(aD + bA) &\le \lambda_1(aD) + \lambda_{n-1}(bA) \\
		&= a\Delta + b\lambda_2(G) \\
		&\le a\Delta + b\delta - \frac{2b(k+\frac{\delta-1}{\delta})}{\delta+1},
	\end{align*}
	which contradicts condition (iii). Thus, (iii) holds.
\end{proof}

Note that $\lambda_2(G, 1) = q_2(G)$ and $\lambda_{n-1}(G, 1, -1) = \mu_{n-1}(G)$. So we obtain the following corollary by Corollary \ref{cor:5.2} directly.

\noindent\begin{corollary} \label{cor:5.3}
	Let $k$ be a positive integer and let $G$ be a graph with minimum degree $\delta \ge 2k+2$ and maximum degree $\Delta$. The following statements hold:
	\begin{enumerate}[{\rm(i)}]
		\item If $q_2(G) < 2\delta - \frac{2(k+\frac{\delta-1}{\delta})}{\delta+1}$, then $G$ has property $P(k, \delta)$;
		\item If $\mu_{n-1}(G) > \Delta - \delta + \frac{2(k+\frac{\delta-1}{\delta})}{\delta+1}$, then $G$ has property $P(k, \delta)$.
	\end{enumerate}
\end{corollary}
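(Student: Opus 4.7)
The plan is to derive Corollary \ref{cor:5.3} as a direct specialization of Corollary \ref{cor:5.2}. The entire argument amounts to choosing the right values of the parameters $a$ and $b$ so that the general matrix $aD(G)+bA(G)$ reduces to either the signless Laplacian $Q(G)$ or the Laplacian $L(G)$, and then rewriting the resulting inequality.

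For part (i), I would observe that $Q(G) = D(G) + A(G) = 1\cdot D(G) + 1\cdot A(G)$, so $q_2(G) = \lambda_2(G,1,1)$. Since $b = 1 > 0$, the relevant branch of Corollary \ref{cor:5.2} is part (ii) with $a = b = 1$; equivalently one may use part (i) with $a = 1$. Substituting $a = 1$ (and $b = 1$) into the bound yields $(a+b)\delta - \frac{2b(k+\frac{\delta-1}{\delta})}{\delta+1} = 2\delta - \frac{2(k+\frac{\delta-1}{\delta})}{\delta+1}$. Thus the hypothesis $q_2(G) < 2\delta - \frac{2(k+\frac{\delta-1}{\delta})}{\delta+1}$ is exactly the hypothesis of Corollary \ref{cor:5.2}(ii) in this specialization, and the conclusion that $G$ has property $P(k,\delta)$ follows.

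For part (ii), I would use the identity $L(G) = D(G) - A(G) = 1\cdot D(G) + (-1)\cdot A(G)$, so $\mu_{n-1}(G) = \lambda_{n-1}(G, 1, -1)$. Since $b = -1 < 0$, the applicable branch is part (iii) of Corollary \ref{cor:5.2}. Substituting $a = 1$ and $b = -1$ into the right-hand side $a\Delta + b\delta - \frac{2b(k+\frac{\delta-1}{\delta})}{\delta+1}$ gives $\Delta - \delta + \frac{2(k+\frac{\delta-1}{\delta})}{\delta+1}$. Hence the hypothesis $\mu_{n-1}(G) > \Delta - \delta + \frac{2(k+\frac{\delta-1}{\delta})}{\delta+1}$ coincides with the hypothesis of Corollary \ref{cor:5.2}(iii), and property $P(k,\delta)$ again follows.

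There is no real obstacle here: both statements are arithmetic specializations, so the only thing to get right is the sign bookkeeping in the case $b<0$, where the indices are reversed (so $\lambda_{n-1}$ rather than $\lambda_2$ appears) and the sign of $-\frac{2b(k+\frac{\delta-1}{\delta})}{\delta+1}$ flips to give the $+\frac{2(k+\frac{\delta-1}{\delta})}{\delta+1}$ term. I would present the proof as two short paragraphs, each invoking the appropriate part of Corollary \ref{cor:5.2} with the explicit substitution.
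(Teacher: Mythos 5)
Your proposal is correct and matches the paper exactly: the paper likewise obtains this corollary by specializing Corollary \ref{cor:5.2} via the identifications $q_2(G)=\lambda_2(G,1)$ (equivalently $\lambda_2(G,1,1)$) and $\mu_{n-1}(G)=\lambda_{n-1}(G,1,-1)$, with the same sign bookkeeping in the $b<0$ case. No differences to report.
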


\section*{Declaration of competing interest}
\quad\quad The authors declare that they have no known competing financial interests or personal relationships that could have appeared to influence the work reported in this paper.

\section*{Data availability}
\quad\quad No data was used for the research described in the article.

\end{document}